\newtheorem{thm}{Theorem}[section]
\newtheorem{cor}[thm]{Corollary}
\newtheorem{con}[thm]{Conjecture}
\newtheorem{lem}[thm]{Lemma}
\newtheorem{prop}[thm]{Proposition}
\theoremstyle{definition}
\newtheorem{defn}[thm]{Definition}
\newtheorem{rem}[thm]{Remark}
\newtheorem{ex}[thm]{Example}
\numberwithin{equation}{section}
\def\R{\mathbb{R}}
\def\et{\tilde{e}}
\def\th{\tilde{h}}
\def\tH{\tilde{H}}
\def\tg{\tilde{g}} 
\def\tgm{\tilde{g}_M} 
\def\tgs{\tilde{g}_{\Sigma}} 
\def\cm{\nabla^{M}} 
\def\cs{\nabla^{\Sigma}} 
\def\tcm{\nabla^{\tilde{M}}} 
\def\tcs{\nabla^{\tilde{\Sigma}}} 
\def\nc{\nabla^{\bot}} 
\def\tnc{\tilde{\nabla}^{\bot}} 
\def\tom{\tilde{\omega}}
\def\tt{\tilde{\theta}}
\def\tb{\tilde{b}}
\def\tu{\tilde{u}}
\def\tV{\tilde{V}}
\def\sff{\mathrm{II}}
\def\Joc{\mathfrak{J}}
\def\l{\langle}
\def\r{\rangle}
\def\vol{\mathrm{vol}}
\DeclareMathOperator\tr{tr}
\DeclareMathOperator{\cod}{codim}
\begin{document}
\title[A conformal invariant and its application]{A conformal invariant and its application to the nonexistence of minimal submanifolds}

\author{Hang Chen}
\address[Hang Chen]{School of Mathematics and Statistics, Northwestern Polytechnical University, Xi' an 710129, P. R. China \\ email: chenhang86@nwpu.edu.cn}
\thanks{Chen supported by Shaanxi Fundamental Science Research Project for Mathematics and Physics (Grant No.~22JSQ005)}

\begin{abstract}
	Let $(M^m,g)$ be an $m$-dimensional closed Riemannian manifold with non-negative sectional curvatures, $m\ge 3$. We define a conformal invariant and prove that, if the conformal invariant is bounded from above by a constant depending only on $m$, then there are no closed $n$-dimensional stable minimal submanifolds in $M$ for all $\xi(m)\le n\le m-2$, where $\xi(m)=1$ when $3\le m\le 5$ and $\xi(m)=2$ when $m\ge 6$.
	In particular, a conformal $m$-sphere with non-negative sectional curvatures does not admit any closed $n$-dimensional stable minimal submanifold for all $\xi(m)\le n\le m-2$.
\end{abstract}

\keywords{Minimal submanifolds, Stability, Lawson-Simons conjecture, Conformal invariant}

\subjclass[2020]{53C40, 53C42, 53C18.}

\maketitle

\section{Introduction}
Let $\Sigma^n$ be an $n$-dimensional submanifold isometrically immersed in an $m$-dimensional Riemannian manifold $(M^m,g_M)$. For simplicity, in this paper we always assume that $\Sigma$ is closed except special declaration. It is  well known that $\Sigma$ is minimal if and only if it is a critical point of  the volume functional. Precisely, if we consider a variation  $\Sigma_t \, (-\epsilon<t<\epsilon)$ of $\Sigma$ such that  $\Sigma_0=\Sigma$, and let $\vol(t)$ denote the volume of $\Sigma_t$, then 
\begin{equation}\label{eq-1var}
\vol'(0)=-\int_{\Sigma} \l n\mathbf{H}, V\r,
\end{equation}
here $\mathbf{H}$ is the mean curvature vector of $\Sigma$ in $M$, and $V$ is the normal variational  vector field on $\Sigma$.

A natural and important question is that whether the minimal submanifold yields a local minimum of the volume functional or not, namely, the minimal submanifold is stable or not.
So one needs to consider the second variation formula at the critical points (cf. \cite{Sim68, Che68}):
\begin{equation}\label{eq-2var}
\vol''(0)=\int_{\Sigma} \l \mathfrak{J}V, V\r,
\end{equation}
where the Jocobi operator $\mathfrak{J}$ 
is a strongly elliptic operator acting on the sections of the normal bundle $\Gamma(T^\bot\Sigma)$, and it has distinct, real eigenvalues
and the dimension of each eigenspace is finite.

The expression of $\mathfrak{J}$ will be presented in \eqref{jacobi}.
Here we just recall that a minimal submanifold $\Sigma$ is called \emph{stable} if and only if 
\begin{equation*}
I(V,V):=\int_\Sigma \l \mathfrak{J}V,V\r\ge 0, \,\forall V\in\Gamma(T^\bot \Sigma).
\end{equation*}

The simplest case is that the ambient space is a standard sphere.
J. Simons proved in his pioneering work \cite{Sim68} that, there are no closed minimal submanifold in the
standard sphere $S^{m}$.
Later, the existence and classification of closed stable minimal submanifolds in compact rank one symmetric spaces have been solved completely.
We summarize these results as follows.

\begin{thm}\label{thm-rank-one}
Let $\Sigma^n$ be a closed minimal submanifold immersed in $M^m$. Then $\Sigma$ is stable if and only if $\Sigma$ is one of the cases listed in the following table.
\begin{center}
\begin{tabular}{c|c|c|c|c|c}
	\rule[-1ex]{0pt}{3.5ex} &  \emph{Simons}\cite{Sim68} & \emph{Lawson and Simons} \cite{LS73} & \multicolumn{3}{c}{\emph{Ohnita} \cite{Ohn86a}} \\ 
	\hline 
	\rule[-1ex]{0pt}{3.5ex}  $M$ & $S^m$	& $P^n(\mathbb{C})$ & $P^n(\mathbb{R}) $& $P^n(\mathbb{H})$ & $P^2(\mathbf{Cay})$  \\ 
	\hline 
	\rule[-1ex]{0pt}{3.5ex} $\Sigma$ & not exist & complex submanifold	&  $P^l(\mathbb{R})$ &   $P^l(\mathbb{H})$& $P^1(\mathbf{Cay})$ \\ 
\end{tabular} 
\end{center}
\end{thm}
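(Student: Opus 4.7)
The theorem synthesizes three separate classification results; the common strategy for the ``only if'' direction is an averaging argument against symmetries of $M$, while the ``if'' direction proceeds by calibration. My plan is to organize the proof along these two lines rather than treat each entry of the table in isolation.

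\textbf{Only if direction (rigidity).} For each ambient $M$ in the table, one constructs a finite family $\{V_\alpha\}\subset\Gamma(T^\bot\Sigma)$ of normal test fields from natural symmetries of $M$, and computes the averaged index $\sum_\alpha I(V_\alpha,V_\alpha)$ via the second variation formula \eqref{eq-2var}. For $M=S^m\subset\R^{m+1}$ one takes $V_\alpha$ to be the projections to $T^\bot\Sigma$ of the $m+1$ standard parallel vector fields of $\R^{m+1}$; these come from conformal Killing fields on the sphere. Summation, combined with minimality of $\Sigma$, yields a value of the form $-c(n,m)\vol(\Sigma)<0$, so at least one $V_\alpha$ is destabilizing, which forces non-existence. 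For the four projective spaces, the conformal fields on $S^m$ are replaced by the projections of an orthonormal basis of Killing fields on $M$. Summing $I(V_\alpha,V_\alpha)$ reduces pointwise to an algebraic expression in the second fundamental form and in the K\"ahler / quaternionic-K\"ahler / Cayley structure tensor of $M$, and this expression is strictly negative unless $\Sigma$ is of the type listed in the table.

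\textbf{If direction (stability).} The submanifolds listed are all homologically volume-minimizing, hence a fortiori stable. Complex submanifolds of $P^n(\mathbb{C})$ are calibrated by powers of the K\"ahler form via the Wirtinger inequality (Federer), and the totally geodesic projective subspaces $P^l(\mathbb{R})\subset P^n(\mathbb{R})$, $P^l(\mathbb{H})\subset P^n(\mathbb{H})$, $P^1(\mathbf{Cay})\subset P^2(\mathbf{Cay})$ are calibrated by parallel forms built from the corresponding algebraic structure on $M$; calibration yields volume-minimization in the homology class and thus $I(V,V)\ge 0$ for every admissible $V$.

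The principal obstacle is the ``only if'' direction in the non-spherical cases, specifically the pointwise algebraic identification of the sign of the averaged second variation. This step requires delicate use of the representation theory of the isotropy group of $M$ (for instance, the action of $\mathrm{Spin}(9)$ on $\R^{16}$ for $P^2(\mathbf{Cay})$), together with a careful analysis of the equality case that is needed to produce the sharp list of stable examples rather than a bare non-existence statement; this is exactly the technical heart of Ohnita's work and would not be easy to compress.
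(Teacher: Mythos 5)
This is a background theorem: the paper does not prove it. It appears in the introduction purely as a summary of results from Simons \cite{Sim68}, Lawson--Simons \cite{LS73}, and Ohnita \cite{Ohn86a}, with the word ``summarize'' making clear that the content is attributed entirely to those references. There is therefore no internal proof to compare your outline against, and the comparison task as posed does not really apply here.

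That said, your sketch is a fair high-level description of the strategies in the cited works, but one claim is substantively wrong as stated. You assert that \emph{all} the listed submanifolds ``are homologically volume-minimizing, hence a fortiori stable,'' citing calibration by parallel forms. This is fine for complex submanifolds of $P^n(\mathbb{C})$ (Wirtinger), and it can be made to work for $P^l(\mathbb{H})\subset P^n(\mathbb{H})$ and $P^1(\mathbf{Cay})\subset P^2(\mathbf{Cay})$ using the quaternionic and Cayley calibrations. But $P^l(\mathbb{R})\subset P^n(\mathbb{R})$ cannot be handled by calibration in the usual sense: $P^n(\mathbb{R})$ is non-orientable for $n$ even, $P^l(\mathbb{R})$ is non-orientable for $l$ even, and the relevant integral homology groups vanish, so there is no nontrivial closed $l$-form on $P^n(\mathbb{R})$ calibrating $P^l(\mathbb{R})$. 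Ohnita's proof of stability in that case (and, uniformly, in the other cases too) proceeds by direct spectral analysis of the Jacobi operator on normal fields, using the homogeneous structure and representation theory of the isotropy group, not by exhibiting a calibration. So the ``if'' direction of your proposal needs a different mechanism for the real projective case, and it is cleaner to treat all four symmetric-space targets by second variation / representation theory as Ohnita actually does, reserving the calibration argument for the complex case where it is classical.

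One smaller point on the ``only if'' direction: for $S^m$ the test fields you describe (projections of the $m+1$ parallel fields of $\R^{m+1}$) are indeed what Simons uses, and the restriction to $S^m$ gives the gradient conformal Killing fields; summing does yield $-n(m-n)\vol(\Sigma)<0$ whenever $0<n<m$, which is the right form of the conclusion. For the projective targets, the fields come from the isometry group (genuine Killing fields on $M$) rather than from a larger conformal group, and the pointwise algebraic expression one lands on involves the relevant structure tensor, as you say; your identification of the equality analysis as the technical heart of Ohnita's argument is accurate.
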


Noting that the $P^n(\mathbb{C})$ is $1/4$-pinched (i.e. the sectional curvature is in  $[1/4,1]$), based on Theorem \ref{thm-rank-one}, Lawson and Simons proposed the following conjecture:
\begin{con}[\cite{LS73}]
	There are no closed  stable minimal submanifolds in any compact, simply connected, strictly $1/4$-pinched (i.e. the sectional curvature is in  $(1/4,1]$) Riemannian manifold. 
\end{con}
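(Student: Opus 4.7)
The natural line of attack is to adapt the averaging technique of Lawson and Simons to the strictly $1/4$-pinched setting. Given a hypothetical closed stable minimal submanifold $\Sigma^n \subset M^m$, the plan is to produce a finite family of test sections $\{V_\alpha\}_{\alpha=1}^{N} \subset \Gamma(T^\bot\Sigma)$ whose aggregated second variation $\sum_\alpha I(V_\alpha, V_\alpha)$ is strictly negative, contradicting stability. Expanding the Jacobi operator $\Joc$ through \eqref{eq-2var}, this sum decomposes into a curvature piece involving $R^M$ and a piece quadratic in the second fundamental form; one would design the $\{V_\alpha\}$ so that the latter cancels or becomes favorable by an algebraic identity, reducing the problem to the pointwise inequality
\[
\sum_{\alpha=1}^N \sum_{i=1}^n \l R^M(V_\alpha, e_i)e_i, V_\alpha \r > 0
\]
throughout $\Sigma$, which one would then verify using the pinching condition $1/4 < K_M \le 1$.

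In Lawson and Simons's resolution of the complex projective case the family $\{V_\alpha\}$ came from an isometric embedding of $\mathbb{CP}^n$ into $\mathfrak{su}(n+1)$ via the moment map, and the needed algebraic identity followed from the Gauss equation applied to this embedding. For a general strictly $1/4$-pinched manifold, no such canonical embedding or transitive isometry group is available, so the first task is to construct an analogous family intrinsically. A natural candidate is to invoke the differentiable sphere theorem of Brendle and Schoen to obtain a diffeomorphism $\Phi\colon M \to S^m$, and pull back the coordinate vector fields of $S^m \hookrightarrow \R^{m+1}$ after twisting them by a suitably chosen conformal factor that absorbs the deviation of $\Phi^{*} g_{S^m}$ from $g_M$. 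One would then show that the error from this twist, measured against the pinching defect $K_M - 1/4$, integrates to something negligible.

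The main obstacle is the sharpness of the constant $1/4$: the bound is attained on $\mathbb{CP}^n$, which does admit closed stable complex minimal submanifolds, so any proof must use strict pinching in a genuinely essential way, and must degenerate as $K_M \to 1/4^+$. A second obstacle is that the diffeomorphism produced by the sphere theorem is not conformal, so it does not translate the curvature pinching into the geometric data consumed by the averaging identity. The present paper sidesteps these difficulties by restricting to metrics conformal to non-negatively curved ones, where the conformal factor itself furnishes the needed test field; extending this strategy to the full conjecture would require a conformal refinement of the sphere theorem strong enough to realize every strictly $1/4$-pinched metric as a conformal deformation of a non-negatively curved one --- a substantial strengthening that appears beyond current techniques, which is why the conjecture remains open.
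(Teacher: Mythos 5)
You have correctly identified that this statement is a conjecture, not a theorem: the paper explicitly says ``Lawson--Simons conjecture is still open'' and offers no proof of it, only partial progress in a restricted setting. So there is no ``paper's own proof'' to compare against, and the right response is exactly what you gave --- a sketch of the natural line of attack, an account of why it stalls, and a description of what the paper does instead. Your outline of the Lawson--Simons averaging scheme, the sharpness of $1/4$ witnessed by $\mathbb{CP}^n$ and its complex submanifolds, and the gap between the diffeomorphism furnished by Brendle--Schoen and the conformal data needed by the method, are all accurate and are the genuine obstacles.

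One imprecision worth flagging: you describe the paper as ``restricting to metrics conformal to non-negatively curved ones.'' The hypothesis in Theorem~\ref{thm:main-1} runs the other way --- the given metric $g$ itself must have non-negative sectional curvatures, and \emph{in addition} its conformal class must contain an ``almost spherical'' representative, quantified by the smallness of the invariant $C(M,[g])$ (with Corollary~\ref{cor:2} the special case $C(M,[g])=0$, i.e.\ $g$ conformal to the round sphere). These two conditions sit on the same metric; neither alone suffices, and neither is implied by strict $1/4$-pinching. This makes the gap between the paper's result and the conjecture even wider than your closing paragraph suggests: one would need not just a conformal refinement of the sphere theorem, but simultaneously a way to dispense with the non-negative-curvature hypothesis, which the paper uses essentially to discard the curvature integrals in Proposition~\ref{prop3.4}.
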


It is also worth pointing out that, the condition in Lawson-Simons conjecture is closely related to the famous differentiable sphere theorem, i.e., a compact, simply connected, strictly $1/4$-pinched Riemannian manifold is diffeomorphic to the round unit sphere (\cite{BS09}).

Lawson-Simons conjecture is still open, but known results support this conjecture.
Howard and Wei \cite{HW15} verified the above conjecture for several classes of positively curved manifolds. On the other hand, when $M$ is a submanifolds of the Euclidean space $\R^N$, some nonexistence results of stable minimal submanifolds of $M$ were obtained by different people under different assumptions (e.g., $\delta$-pinched hypersurface in $\R^N$,  algebraic conditions on the second fundamental form of $M$ in $\R^N$), and in some results the pinching constant $\delta$ can be less than $1/4$ (cf. \cite{Sim68, Ohn86a, Li97, SH01, HW03, CW13, HW15}).
For the existence and nonexistence of stable submanifolds with fixed dimension $n$ (especially for $n=1,2$), the reader can refer \cite{Ami76, Zil77, Oka89, TU22}.

Very recently, Franz and Trinca studied the nonexistence of stable minimal submanifolds in a conformal sphere. They proved
\begin{thm}[\cite{FZ23}]\label{thm-FT}
	Let $(M^m,g)$ be a Riemannian manifold conformal to the round unit sphere $(S^m,h_1)$ and that is $\delta$-pinched for some $\delta>0$.
	Then there is no stable minimal $n$-dimensional submanifold of $M$ for all $2 \le n\le m-\delta^{-1}$.
\end{thm}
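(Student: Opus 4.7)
The plan is to carry out a conformal-invariant version of the classical Lawson--Simons averaging argument. Since $(M^m, g)$ is conformal to $(S^m, h_1)$, identify $M$ with $S^m$ as smooth manifolds and write $g = e^{2\phi} h_1$ for some $\phi \in C^\infty(S^m)$. For each $a \in \R^{m+1}$, let $V_a(x) := a - \l a, x\r x$, the tangential projection of $a$ along $S^m \subset \R^{m+1}$. Each $V_a$ is a conformal Killing field of $h_1$, and because the conformal Killing condition is conformally invariant, each $V_a$ is also a conformal Killing field of $g$. This yields an $(m+1)$-parameter family of natural test vector fields on $(M, g)$ that does not depend on $\phi$, together with the algebraic completeness relation $\sum_{k=1}^{m+1} V_{a_k} \otimes V_{a_k} = h_1|_{TS^m}$ for any $h_1$-orthonormal basis $\{a_k\}$.

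Assume for contradiction that $\Sigma^n \subset (M, g)$ is a closed stable minimal submanifold with $2 \le n \le m - \delta^{-1}$. First, I would expand the stability inequality $I_g(V_a^\bot, V_a^\bot) \ge 0$, where $V_a^\bot$ is the $g$-normal part of $V_a$ along $\Sigma$. Using the conformal Killing identity $\nabla^g_Y V_a = \mu_a Y + A_a Y$ (with $\mu_a \in C^\infty(M)$ and $A_a$ $g$-skew) together with minimality $\tr_\Sigma \sff = 0$, the integrand rewrites as a combination of curvature terms $\l R^M(e_i, V_a) e_i, V_a\r$, pure divergence terms that vanish on closed $\Sigma$, and squares of $\sff$ paired against $V_a^\bot$. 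Second, I would sum over the basis $\{a_k\}$: the completeness relation collapses the accumulated curvature contributions into a trace of mixed sectional curvatures between $T\Sigma$ and $T^\bot\Sigma$, while the sum of $\sff$-squared terms bounds against the mean curvature squared (hence vanishes). Third, the $\delta$-pinching $\delta \le K_g \le 1$ yields
\begin{equation*}
\sum_{\alpha, i} K_g(e_i, e_\alpha) \ge \delta\, n(m-n),
\end{equation*}
where the sum runs over tangential and normal directions to $\Sigma$. Comparing this lower bound with the divergence-free upper bound on the remaining terms (which scales linearly in $n$ with coefficient controlled by the upper curvature bound $1$) produces a strict inequality $\sum_k I_g(V_{a_k}^\bot, V_{a_k}^\bot) < 0$ whenever $\delta(m-n) > 1$, equivalently $n \le m - \delta^{-1}$, contradicting stability.

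The main obstacle I anticipate is the clean bookkeeping of the conformal factor $\phi$ in the index form. The objects $V_a$, the tangent--normal splitting along $\Sigma$, and the conformal Killing condition are all conformally invariant, but the stability inequality, the ambient curvature, and the pinching are all attached to $g$. Passing to $g$ introduces $\nabla^{h_1}\phi$- and $\mathrm{Hess}_{h_1}\phi$-corrections in the connection and curvature that must either assemble into exact divergences (and hence integrate to zero on closed $\Sigma$) or be absorbed into the pinching-controlled principal terms; since the threshold $n \le m - \delta^{-1}$ is sharp, there is essentially no margin for loose estimates. A secondary subtlety is the lower bound $n \ge 2$, which is needed because for $n = 1$ the averaging over $\{V_{a_k}^T\}$ degenerates and the second-variation identity loses one effective dimension, so the comparison against the pinched curvature breaks.
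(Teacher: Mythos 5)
The paper does not give a proof of Theorem~\ref{thm-FT}; it is cited as a known result from \cite{FZ23}. What the paper \emph{does} prove is the more general Theorem~\ref{thm:main-1}, whose proof uses a closely related but genuinely different mechanism, and comparing your outline against that proof exposes a real gap in your sketch.

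Your starting point is sound: you correctly observe that $V_a(x)=a-\l a,x\r x$ is a conformal gradient field on $(S^m,h_1)$ (it is $\nabla f_a$ with $\nabla^2 f_a=-f_a h_1$), that the conformal Killing property is conformally invariant, and that the completeness identity $\sum_k V_{a_k}\otimes V_{a_k}=h_1|_{TS^m}$ holds. Your $V_a^\bot$ is exactly the paper's test field $V_E$ in the special case $d=m$ and $a=0$ (in the paper's notation $\tV^\alpha=e^{au}\l E,\et_\alpha\r$). So you are proposing a one-parameter-less version of the same ansatz.

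The gap is in the central claim that the $\nabla^{h_1}\phi$- and $\mathrm{Hess}_{h_1}\phi$-corrections ``assemble into exact divergences.'' They do not. Lemma~\ref{lem:int-conf} shows that after passing to the conformal metric the index form integrand picks up the terms
\begin{equation*}
-2\sum_{j,\alpha}\tV^\alpha_{,j}\tV^\alpha\tu_j-\sum_i\tu_{ii}|V|^2_{\tg}+n|\tcm u|^2_{\tg}|V|^2_{\tg}-n\sum_{\alpha,\beta}\tu_{\alpha\beta}\tV^\alpha\tV^\beta,
\end{equation*}
and after summing over a basis $\{E_t\}$, Proposition~\ref{prop3.3} reduces these to the explicitly \emph{non-negative}, non-divergence quantity $p(a-1)^2\sum_i\tu_i^2+n\sum_\alpha\tu_\alpha^2$, which with your choice $a=0$ becomes $p\sum_i\tu_i^2+n\sum_\alpha\tu_\alpha^2$. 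This is a term working \emph{against} you, not one that disappears by Stokes. The paper's proof of Theorem~\ref{thm:main-1} handles it by two devices you do not use: (i) the free parameter $a$ in the weight $e^{au}$, chosen as $a=1$ (or $a=1/2$ for $n=1$) to kill or control the $\sum_i\tu_i^2$ part; and (ii) the Gauss equations \eqref{eq:sec-t-n'}--\eqref{eq:sec-t-t'}, used to eliminate the normal second derivatives $\tu_{\alpha\alpha}$ and $|\tnc u|^2_{\tg}$ in favor of ambient curvature sums, which is precisely where the curvature sign/pinching hypothesis enters. Indeed, the introduction explicitly lists these two devices as what distinguishes this proof from \cite{FZ23}, so even in \cite{FZ23}'s original argument the leftover conformal terms must be absorbed by the $\delta$-pinching through a quantitative estimate, not dismissed as divergences. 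Since the threshold $n\le m-\delta^{-1}$ is tight, this absorption is exactly the content of the proof; your sketch defers it, which is why it does not yet constitute a proof. Your remark about why $n\ge2$ is needed is also too vague to be checkable; in the paper's framework the failure at $n=1$ traces to the sign of the coefficient $4a-2-n+p(a-1)^2$ of $|\tcs u|^2_{\tg}$, a computation your outline never reaches.
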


In this paper, we will generalize the above theorem to general ambient spaces rather than a sphere.
First, we introduce the following conformal invariant of a Riemannian manifold.
\begin{defn}
	For a closed Riemannian manifold $(M^m,g)$,
	we define a non-negative constant
	\begin{equation*}
		C(M,[g]):=\inf_{\psi\in \Gamma}\max_{M}|\sff_{\psi}|^2,
	\end{equation*}
	where
	\begin{equation*}
		\Gamma=\{\psi: M\to (S^{d},h_1)\mid \mbox{ $\psi$ is non-degenerate conformal map for some $d$}\},
	\end{equation*}
	and $\sff_{\psi}$ is the second fundamental form of $(M, \psi^\ast h_1)$ in $S^d$.
\end{defn}

\begin{rem}
	The conformal map $\psi$ exists for $d$ big enough because of the Nash embedding theorem (via the stereographic projection).

	If $(M^m,g)=(S^m,h_1)$, we have $C(M,[g])=0$.
\end{rem}

Throughout this article we set $\xi(m)=1$ when $3\le m\le 5$ and $\xi(m)=2$ when $m\ge 6$, now the main theorem is stated as follows.
\begin{thm}\label{thm:main-1}
	Let $(M^m,g)$ be an $m$-dimensional closed Riemannian manifold with non-negative sectional curvatures, $m\ge 3$.
	Then for each $\xi(m)\le n\le m-2$,
	there exists a constant $c(m,n)$ depending only on $m,n$ such that
	if
	\begin{equation}
		C(M,[g])< c(m,n),
	\end{equation}
	then
	there are no closed $n$-dimensional stable minimal submanifolds in $M$.
\end{thm}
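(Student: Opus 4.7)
The plan is to run a second variation argument on $\Sigma$ using test sections produced from a nearly optimal conformal embedding of $M$ into a round sphere. Given $\varepsilon>0$, the definition of $C(M,[g])$ supplies some $d\in\mathbb{N}$ and a non-degenerate conformal map $\psi\colon M\to S^d\subset\R^{d+1}$ with $\max_M|\sff_\psi|^2\le C(M,[g])+\varepsilon$. Write $\psi^\ast h_1=e^{-2\varphi}g$ for some $\varphi\in C^\infty(M)$ and, for contradiction, let $\Sigma^n\subset(M,g)$ be a closed stable minimal submanifold.

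For each member $E_\alpha$ of the standard parallel basis of $\R^{d+1}$, define the test section $V_\alpha$ as the component of $E_\alpha|_\Sigma$ normal to $\Sigma$ in $(M,g)$. Using the splitting of $\R^{d+1}$ along the nested inclusions $T\Sigma\subset TM\subset TS^d\subset T\R^{d+1}$, the Gauss equations for $\Sigma\subset M$ and for the $\psi$-image $M\subset S^d$, and the polarization identity $\sum_\alpha\langle E_\alpha,X\rangle\langle E_\alpha,Y\rangle=\langle X,Y\rangle$, the sum $\mathcal S:=\sum_{\alpha=1}^{d+1}I(V_\alpha,V_\alpha)$ reduces to an integral over $\Sigma$ of the schematic form
\begin{equation*}
\mathcal S=\int_\Sigma\bigl(-L(m,n)-K_M+E_\varphi+E_{\sff_\psi}\bigr)\,d\vol_g .
\end{equation*}
Here $L(m,n)$ is the classical Lawson--Simons combinatorial coefficient, strictly positive precisely in the range $\xi(m)\le n\le m-2$ (dictated by the need for a sufficient number of normal directions to run the averaging, together with the low-dimensional exception permitting $n=1$ when $m\le 5$); $K_M\ge 0$ is a trace of sectional curvatures of $(M,g)$, non-negative by assumption; $E_\varphi$ collects terms in $\nabla\varphi$ and $\Delta\varphi$; and $E_{\sff_\psi}$ is bounded pointwise by a dimensional constant times $|\sff_\psi|^2\le C(M,[g])+\varepsilon$.

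To neutralise the uncontrolled conformal factor $\varphi$, I would rescale the test sections to $\tilde V_\alpha:=e^{\sigma\varphi}V_\alpha$ with $\sigma=\sigma(m,n)$ chosen so that the $\nabla\varphi$ terms organise into a divergence on $\Sigma$ (integrating to zero on the closed submanifold) and the $|\nabla\varphi|^2$ and $\Delta\varphi$ contributions cancel among themselves. This is the conformal covariance device already underlying Theorem~\ref{thm-FT} in the case where $\psi$ is a diffeomorphism onto $S^m$ (so $\sff_\psi\equiv 0$); the present setting requires tracking the additional cross terms with $\sff_\psi$, which can be absorbed into $E_{\sff_\psi}$ at the cost of enlarging the universal constant. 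With $\varphi$ eliminated and $K_M$ discarded, one is left with $\mathcal S\le\int_\Sigma\bigl(-L(m,n)+c_0|\sff_\psi|^2\bigr)d\vol_g$ for an explicit $c_0=c_0(m,n)$.

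Defining $c(m,n):=L(m,n)/c_0$ and invoking the hypothesis $C(M,[g])<c(m,n)$ makes $\mathcal S<0$ once $\varepsilon$ is chosen small, contradicting the stability of $\Sigma$. The main obstacle will be the $\varphi$-cancellation step: fixing the correct exponent $\sigma$ and arranging the algebra so that every term in $E_\varphi$ is either a divergence or is compensated by another, since $\varphi$ admits no a priori bound in terms of the conformal invariant. The Franz--Trinca model in \cite{FZ23} supplies the template, but the presence of a non-trivial second fundamental form $\sff_\psi$ creates extra cross-terms that must be carefully quarantined inside $E_{\sff_\psi}$ before the final negativity estimate goes through.
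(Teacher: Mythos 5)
Your proposal follows the same overall template the paper uses: project the ambient parallel fields onto $T^\perp\Sigma$, rescale by a power of the conformal factor, sum the index form over the basis of $\R^{d+1}$, and estimate the $\sff_\psi$-contributions by $C(M,[g])+\varepsilon$. The choice of exponent is indeed the paper's device too (the paper sets $\tV^\alpha=e^{au}\langle E,\tilde e_\alpha\rangle$ and later takes $a=1$ for $n\ge 2$, $a=1/2$ for $n=1$, $m\le 5$). So the shape of the argument is right.

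There is, however, a genuine gap at exactly the step you flag as ``the main obstacle,'' and your proposed resolution does not go through as stated. After the first round of cancellations (cf.\ Proposition~\ref{prop3.3}), the sum $e^{-2au}\sum_t\langle\Joc V_t,V_t\rangle$ still contains the term
\begin{equation*}
n\sum_\alpha \tu_\alpha^2 = n\,|\tnc u|_{\tg}^2 ,
\end{equation*}
a \emph{normal} derivative of the conformal factor. No choice of the exponent $\sigma$ (equivalently $a$) eliminates this, and it cannot be rewritten as a divergence on $\Sigma$ or absorbed by $\Delta^\Sigma u$, because those are derivatives in tangential directions only. So the claim that ``every term in $E_\varphi$ is either a divergence or is compensated by another'' is not substantiated by the rescaling device alone. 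The paper's actual mechanism is to use the conformal curvature identities \eqref{eq:sec-t-n'} and \eqref{eq:sec-n-n'} to cancel $\sum_\alpha \tu_{\alpha\alpha}$ between the tangent--normal and normal--normal curvature sums, solve for $|\tnc u|_{\tg}^2$ in terms of ambient sectional curvatures, $\Delta^\Sigma u$, $|\tcs u|^2$, and the second fundamental form of $\psi$, and only then integrate by parts. This requires $p\ge 2$ (hence $n\le m-2$) so that the normal--normal sum is nonvacuous, and it is where non-negative sectional curvature of $(M,g)$ enters — both as a sign condition on the discarded curvature term. That is the missing ingredient in your sketch.

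A smaller but still substantive point: you attribute the range $\xi(m)\le n\le m-2$ to a ``Lawson--Simons combinatorial coefficient,'' positing that the main negative term $L(m,n)$ changes sign at those thresholds. In the paper that is not what happens: the main negative term is simply $-n$, coming from the Gauss equation $R^{\tilde M}_{i\alpha i\alpha}=1+\cdots$ for $M\subset S^d$, and is negative for every $n\ge 1$. The upper bound $n\le m-2$ comes from needing $p\ge 2$ for the normal--normal cancellation just described, and the lower bound $\xi(m)\le n$ comes from requiring the residual coefficient $4a-2-n+p(a-1)^2$ of $|\tcs u|_{\tg}^2$ to be nonpositive for an admissible $a$. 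So the dimensional restrictions trace to the conformal-factor cancellation, not to a sign change in the leading coefficient.
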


We immediately obtain the following corollaries.
\begin{cor}\label{cor:1}
	Let $(M^m,g)$ be an $m (\ge 3)$-dimensional closed Riemannian manifold with non-negative sectional curvatures.
	Then there exists a constant $c'(m)$ depending only on $m$ such that
	if
	\begin{equation}
		C(M,[g])< c'(m),
	\end{equation}
	then
	there are no closed $n$-dimensional stable minimal submanifolds in $M$ for all $\xi(m)\le n\le m-2$.
\end{cor}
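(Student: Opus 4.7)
The plan is to deduce the corollary directly from Theorem \ref{thm:main-1} by a finite minimization argument. For $m \ge 3$, the set of admissible integers
\[
\mathcal{N}(m) := \{n \in \mathbb{Z} : \xi(m) \le n \le m-2\}
\]
is finite and non-empty: since $\xi(m) \in \{1,2\}$, a quick case-check ($m=3$ gives $\mathcal{N}(m)=\{1\}$; $m=6$ gives $\mathcal{N}(m) = \{2,3,4\}$; and so on) confirms it is always non-empty. For each $n \in \mathcal{N}(m)$, Theorem \ref{thm:main-1} supplies a positive constant $c(m,n)$ depending only on $m$ and $n$ with the claimed nonexistence property.

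I would then set
\[
c'(m) := \min_{n \in \mathcal{N}(m)} c(m,n),
\]
a minimum of finitely many positive numbers, so $c'(m) > 0$ is itself a constant depending only on $m$. If $(M^m, g)$ is a closed Riemannian manifold with non-negative sectional curvatures satisfying $C(M, [g]) < c'(m)$, then for every $n \in \mathcal{N}(m)$ we have $C(M, [g]) < c(m,n)$, and an application of Theorem \ref{thm:main-1} with this particular $n$ rules out the existence of closed $n$-dimensional stable minimal submanifolds in $M$. Ranging $n$ over all of $\mathcal{N}(m)$ gives the uniform statement of the corollary.

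In short, the corollary is a routine uniformization of the main theorem across the finite range of admissible dimensions; the only point to verify is the positivity and well-definedness of the minimum $c'(m)$, which is immediate. The genuine mathematical content resides entirely in Theorem \ref{thm:main-1}, so there is no additional obstacle to address here beyond invoking that theorem and taking a finite minimum.
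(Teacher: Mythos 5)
Your proof matches the paper's own argument exactly: the paper's Remark following the corollaries notes that Corollary \ref{cor:1} follows from Theorem \ref{thm:main-1} by setting $c'(m)=\min_{\xi(m)\le n\le m-2}\{c(m,n)\}$, which is precisely what you do. The finite-minimum argument and the verification that the range of $n$ is non-empty are correct and complete.
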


\begin{cor}\label{cor:2}
	If $(M^m,g)$ is an $m (\ge 3)$-dimensional closed Riemannian manifold with non-negative sectional curvatures and is conformal to $(S^m,h_1)$,
	then there are no closed $n$-dimensional stable minimal submanifolds in $M$ for all $\xi(m)\le n\le m-2$.
\end{cor}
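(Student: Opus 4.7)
I would argue by contradiction. Suppose $\Sigma^n \subset (M,g)$ is a closed stable minimal submanifold with $\xi(m) \le n \le m-2$. For arbitrary $\varepsilon > 0$, pick $d$ large and a non-degenerate conformal immersion $\psi: M \to (S^d, h_1) \subset \R^{d+1}$ such that $\max_M |\sff_\psi|^2 < C(M,[g]) + \varepsilon$. Writing $\psi^\ast h_1 = e^{2u} g$, the map $\psi: (M, e^{2u}g) \to (S^d, h_1)$ is an isometric immersion with controllably small second fundamental form. The strategy is to imitate the classical Simons / Lawson--Simons averaging argument \cite{Sim68,LS73}, as recently transported to conformal spheres in \cite{FZ23}, by using the position vectors of $\R^{d+1}$ to generate test variations and summing the second variation inequality \eqref{eq-2var} against an orthonormal basis.

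Concretely, for each constant $W \in \R^{d+1}$, successive orthogonal projections onto $T_{\psi(x)}S^d$, onto $d\psi(T_xM) \subset T_{\psi(x)}S^d$, and (after applying $(d\psi)^{-1}$) onto $T^\bot\Sigma \subset TM$, produce a section $V_W \in \Gamma(T^\bot\Sigma)$. Using the identity $\sum_A W_A \otimes W_A = \mathrm{id}_{\R^{d+1}}$ for an orthonormal basis $\{W_A\}_{A=1}^{d+1}$, the sum $\sum_A I(V_{W_A}, V_{W_A})$ collapses to an intrinsic integral on $\Sigma$ consisting of: (i) a universal dimensional leading term coming from the constant curvature of $S^d$; (ii) a curvature correction given by a partial trace of the Riemann tensor of $(M,g)$; (iii) terms involving the mean curvature $\mathbf{H}$ of $\Sigma$ in $M$, which vanish by minimality; and (iv) error terms pointwise controlled by $|\sff_\psi|^2$ times constants depending only on $m, n$.

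Under the non-negative sectional curvature hypothesis, the curvature correction (ii) has the correct sign to reinforce instability, and the dimensional constraint $\xi(m) \le n \le m-2$ is precisely the range in which term (i) is strictly negative with the required slack, matching the analogous thresholds in \cite{LS73,HW15}. Taking $c(m,n)$ slightly smaller than the ratio of the absolute leading coefficient to the universal constant appearing in (iv) ensures that $\sum_A I(V_{W_A}, V_{W_A}) < 0$ whenever $C(M,[g]) < c(m,n)$. Hence some $V_{W_A}$ achieves $I(V_{W_A}, V_{W_A}) < 0$, contradicting stability.

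The main technical obstacle will be the careful bookkeeping of the conformal factor $e^{2u}$: the decomposition of $TM \otimes \R \cong TM \otimes \R|_{\psi(M)}$ versus the decomposition inside $T\R^{d+1}$, the Gauss equation for $\psi(M) \subset S^d$, and the second variation formula on $\Sigma \subset (M,g)$ must all be reconciled so that every $u$-dependent term either cancels via minimality of $\Sigma$ or is absorbed into the $|\sff_\psi|^2$ error. The role of non-negative sectional curvature is subtle here as well, since after the conformal pullback one must still be able to compare the Riemann tensor of $(M,g)$ with that of $(M, e^{2u}g)$ without introducing uncontrolled gradient-of-$u$ terms into the curvature contribution. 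Once this is organized, sending $\varepsilon \to 0$ and taking the infimum over $\psi$ recovers the stated threshold, and Corollaries \ref{cor:1} and \ref{cor:2} are immediate (in the latter case because $C(M,[g]) = 0$).
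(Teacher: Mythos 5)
Your overall framing is correct: Corollary~\ref{cor:2} does follow immediately from Theorem~\ref{thm:main-1} once one observes that $C(M,[g])=0$ when $[g]$ contains the round metric (take $\psi$ to be the conformal diffeomorphism onto $S^m\subset S^d$, so $\sff_\psi=0$). You recognize this at the end, and the stability $\Rightarrow$ contradiction averaging strategy you sketch for the main theorem is the one the paper actually runs. However, your sketch of that main argument contains a genuine gap in the treatment of the $u$-dependent terms, and this gap matters even in the Corollary~\ref{cor:2} situation where $\sff_\psi\equiv 0$.

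You assert that after summing $I(V_{W_A},V_{W_A})$ over an orthonormal basis, ``every $u$-dependent term either cancels via minimality of $\Sigma$ or is absorbed into the $|\sff_\psi|^2$ error.'' This is false. After Stokes' formula is applied (which is itself a nontrivial step you omit), there is a residual term
$\int_\Sigma\big(4a-2-n+p(a-1)^2\big)|\tcs u|_{\tg}^2\,dv_g$
in Proposition~\ref{prop3.4}, where $a$ is the exponent in the weight $e^{au}$ multiplying the projected vector field and $p=m-n$. This term does not vanish by minimality, and it cannot be absorbed into $|\sff_\psi|^2$: when $M$ is conformal to $S^m$, $\sff_\psi=0$ while $\tcs u$ is generically nonzero, so the two quantities are completely independent. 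Your construction has no weight (effectively $a=0$), which makes the coefficient $p-n-2$; this is strictly positive whenever $p\ge n+3$ (e.g.\ $n=2$, $m\ge 7$, or $n=1$, $m\ge 5$), so the sum of second variations does not come out negative and the contradiction argument collapses. The paper fixes this precisely by introducing the factor $e^{au}$ and choosing $a=1$ when $n\ge 2$ (coefficient $2-n\le 0$) and $a=1/2$ when $n=1$ and $m\le 5$ (coefficient $p/4-1\le 0$); this parameter choice is exactly what produces the threshold $\xi(m)$ with the split at $m=6$, which your sketch cannot explain. To complete the proposal you would need to add the $e^{au}$ weight to $V_W$, perform the integration by parts that converts $\sum_i\tu_{ii}^{\Sigma}$ into a $|\tcs u|_{\tg}^2$ term, and then choose $a$ as above.
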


\begin{cor}\label{cor:3}
	Let $(M^m,g)$ be an $m (\ge 3)$-dimensional closed submanifold in $S^d$ with non-negative sectional curvatures.
	If the second fundamental form $\sff$ of $M$ satisfies $|\sff|^2<c'(m)$,
	then there are no closed $n$-dimensional stable minimal submanifolds in $M$ for all $\xi(m)\le n\le m-2$.
\end{cor}

\begin{rem}
	(1) The Corollary \ref{cor:1} is directly from Theorem \ref{thm:main-1} by setting
	\begin{equation*}
		c'(m)=\min_{\xi(m)\le n\le m-2}\{c(m,n)\}.
	\end{equation*}

	(2) The Corollary \ref{cor:2} confirms Lawson–Simons conjecture in conformal spheres $S^m$ for all $n$-dimensional submanifolds with $2 \le n\le m-2$.
	
	(3) In fact, by Remark \ref{rem:rough-bounds}, we can set rough upper bounds
	\begin{equation}
		c(m,n)=\begin{cases}
			2-\frac{4}{m+1}, & \mbox{ if $n=1$};\\
			\frac{n(m-n)}{m}, & \mbox{ if $2\le n\le m-2$},
		\end{cases}
	\end{equation}
	and 
	\begin{equation}
		c'(m)=2-\frac{4}{m}.
	\end{equation}

	(4) If we already know that $(M^m,g)$ admits certain stable minimal $n$-dimensional submanifold, then $c(m,n)$ gives a lower bound of $C(M,[g])$ by Theorem \ref{thm:main-1}.

	(5) Although our results gives no information about $n=1$ and $p=1$, from Synge's theorem (cf. \cite[Section 6.3]{Pet16}) we know that any closed geodesic in an even dimensional oriented Riemannian manifold with positive sectional curvature must be unstable; it is not hard to show that any closed two-sided hypersurface in a Riemannian manifold with positive Ricci curvature is always unstable.
\end{rem}

\bigskip
This paper is organized as follows. In Sect.~\ref{Sect-2}, we introduce notations and give relations between corresponding geometric quantities of conformal metrics.
In Sect.~\ref{Sect-2}, we proof main theorem.
The key point is to choose suitable test normal vector fields.
In most of early work, the test normal vector fields are constructed by projecting an orthonormal basis of $\mathbb{R}^N$ to the $\Sigma$ when treated $M$ as a submanifold in $\mathbb{R}^N$, and these test normal vector fields have good property, especially for $M=S^m$.
We also consider these project vector fields.
But since $M$ is immersed in $S^d$ conformally rather than isometrically,
we need converting the computations w.r.t conformal metric to the original metric by using conformal relations.
More specifically, we modify the project vector fields by multiplying a factor involving a parameter to be determined.
It is worth to point out that, there are at least three differences from \cite{FZ23}.
First, we consider general ambient spaces rather than a sphere and use some algebraic inequalities to estimate the second fundamental form.
Second, we cancel terms involving derivatives along normal directions by Gauss equations, which makes us relax the $\delta$-pinched condition to non-negative sectional curvature.
Third, by choosing suitable parameter we can detail with the case $n=1$ when $3\le m\le 5$.

\section{Preliminaries}\label{Sect-2}

\subsection{Notations}
We agree on the following range of indices:
\begin{gather*}
	1\le i,j,k,\dots\le n; n+1\le \alpha, \beta, \gamma,\dots\le m;\\
	1\le A,B,C,\dots\le m; m+1\le \mu, \nu,\dots \le d;\\
	1\le s,t,\dots\le d+1.
\end{gather*}

For $(M^m,g_M)$, we use the following notations.
\begin{itemize}
	\item $\{e_A\}_{A=1}^{m}$: a local orthonormal frame on $(M, g_M)$;
	\item  $\{\omega_A\}_{A=1}^{m}$: the dual coframe of $\{e_A\}$;
	\item $\cm$: Levi-Civita connection on $(M,g_M)$;
	\item $\{\omega_{AB}\}$: the connection $1$-forms of $(M,g_M)$;
	\item $F_A$: the components of 1st covariant derivative of $F$ for any $F\in C^{\infty}(M)$;
	\item $F_{AB}$: the components of 2nd covariant derivative of $F$ for any $F\in C^{\infty}(M)$;
	\item $R^M_{ABCD}$: the components of $(0,4)$-type Riemannian curvature tensor on $(M,g_M)$, i.e.
	$R^M_{ABCD}=\langle (\cm_{e_A}\cm_{e_B}-\cm_{e_B}\cm_{e_A}-\cm_{[e_A,e_B]}) e_D, e_C \rangle$.
\end{itemize}

When considering an isometrical immersion $\phi: \Sigma^n\to (M^m,g_M)$, we have an induced metric
$g_{\Sigma}=\phi^\ast g_M$.
We choose $\{e_A\}=\{e_i,e_\alpha\}$, and denote $\theta_{A}=\phi^\ast \omega_{A}, \theta_{AB}=\phi^\ast \omega_{AB}$.
We know that $\{\theta_i\}$ is the dual of $\{e_i\}$ and $\theta_\alpha=0$. 
We set $p=m-n=\cod \Sigma$ and use the following notations.

\begin{itemize}
	\item $\cs$ and $\nc$: Levi-Civita connection and normal connection on $(\Sigma,g_\Sigma)$, resp.;
	\item $\theta_{ij}$ and $\theta_{\alpha\beta}$: Levi-Civita and normal connection $1$-forms on $(\Sigma,g_\Sigma)$, resp.;
	\item $R^{\Sigma}_{ijkl}$: the components of $(0,4)$-type Riemannian curvature tensor on $\Sigma$;
	\item $h_{ij}^{\alpha}$: the components of second fundamental form of $\Sigma$ in $(M,g_M)$, which satisfies $\theta_{i\alpha}=\sum_j h_{ij}^{\alpha}\theta_j$;
	\item $H^\alpha=
	\frac{1}{n}\sum_{i}h_{ii}^{\alpha}$: the components of mean curvature w.r.t $e_\alpha$ of $\Sigma$ in $(M,g_M)$;
	\item $|h|^2=\sum_{i,j,\alpha}(h_{ij}^\alpha)^2$: squared norm of the second fundamental form.
	\item $V^\alpha_{,j}$: the components of the 1st covariant derivative of $V=\sum_\alpha V^\alpha e_\alpha\in \Gamma(T^{\bot}\Sigma)$;
\end{itemize}

For any $F\in C^\infty(M)$, we consider the restriction of $F$ on $\Sigma$ (still denote by $F$).
Let $F_{i}^\Sigma, F_{ij}^\Sigma$ denote the components of the 1st and 2nd covariant derivatives of $F$ as a smooth function on $\Sigma$.
Then by pulling back
\begin{align}
	\sum_A F_{A}\omega_A&= dF,\label{eq:1st-d}\\
	\sum_{B}F_{AB}\omega_B&=dF_{A}+\sum_{B}\omega_{BA}F_B,\label{eq:2st-d}
\end{align}
by $\phi$ respectively, we obtain
\begin{align}
	F_i&=F_i^\Sigma,\label{eq:1st-d-s}\\
	F_{ij}&=F_{ij}^\Sigma-h_{ij}^\alpha F_{\alpha}\label{eq:2st-d-s}.
\end{align}
The squared norm of the gradient and the Laplacian of $F$ on $\Sigma$ are respectively given by
\begin{equation}\label{eq:2.5}
	|\cs F|^2=\sum_i F_i^2, \quad  \Delta^\Sigma F=\sum_{i} F^\Sigma_{ii}.
\end{equation}
The squared norm of the gradient and the Laplacian of $F$ on $M$ is given by
\begin{equation}\label{eq:2.6}
	|\cm F|^2=\sum_A F_A^2,
\end{equation}
and we will denote
\begin{equation}\label{eq:2.7}
	|\nc F|^2:=\sum_\alpha F_\alpha^2=|\cm F|^2-|\cs F|^2,
\end{equation}

\subsection{Conformal relations}
Let $\tgm$ be a metric on $M$ conforming to $g_M$. We assume $\tgm=e^{2u}g_M$ for some  $u\in C^{\infty}(M)$.
Let $\tgs=\phi^\ast\tgm$.

For convenience, we sometimes simply use $\tilde{M}$ and $\tilde{\Sigma}$ to represent $(M,\tgm)$ and $(\Sigma, \tgs)$ respectively (i.e., equipped with the conformal metrics), and $M$ and $\Sigma$ to represent the manifold equipped with the original metrics.

We add  ``$\tilde{\ }$'' to denote the corresponding quantities on $\tilde{M}$ and $\tilde{\Sigma}$.

We choose local frames such that $\{\et_A=e^{-u}e_A\}$ and $\{\tom_A=e^{u}\omega_A\}$.
Next we list some relations between corresponding geometric quantities of conformal metrics.
They are well known and the details of proofs can be found in related literatures (cf. \cite{Che73, CW19}).

\textbf{(1) Covariant derivatives of functions.}

From
\begin{equation*}
	\sum_{A=1}^{m}\tilde{F}_A\tom_A=dF=\sum_{A=1}^{m}F_A\omega_A
\end{equation*}
we have
\begin{equation}\label{eq27}
	\tilde{F}_A=e^{-u}F_A,~~\mbox{ for } A=1,\cdots, m.
\end{equation}

The structure equation implies
\begin{equation}
	\begin{aligned}\label{eq26}
	\tom_{AB}=\omega_{AB}+u_A\omega_B-u_B\omega_A.
	\end{aligned}
\end{equation}

Since
\begin{equation*}
	\sum_{B=1}^m F_{AB}\omega_B=dF_A+\sum_{B=1}^m\omega_{BA}F_B,
\end{equation*}
we have
\begin{equation*}
	e^{2u}\tilde{F}_{AB}=F_{AB}+\sum_{C}u_CF_C \delta_{AB}-F_Au_B-F_Bu_A.
\end{equation*}

\textbf{(2) Riemannian curvatures.}

\begin{equation}\label{eq209}
	\begin{aligned}
	e^{2u}R^{\tilde{M}}_{ABCD}={}&R^M_{ABCD}-(u_{AC}\delta_{BD}+u_{BD}\delta_{AC}-u_{AD}\delta_{BC}-u_{BC}\delta_{AD})\\
	&+(u_Au_C\delta_{BD}+u_Bu_D\delta_{AC}-u_Bu_C\delta_{AD}-u_Au_D\delta_{BC})\\
	&-(\sum_Cu_{C}^2)(\delta_{AC}\delta_{BD}-\delta_{AD}\delta_{BC}).
	\end{aligned}
\end{equation}

For $A\neq B$, it follows from \eqref{eq209} that
\begin{equation*}
	\begin{aligned}
		e^{-2u}R^M_{ABAB}={}&R^{\tilde{M}}_{ABAB}+(\tu_{AA}+\tu_{BB})+(\tu_A^2+\tu_B^2)-|\tcm u|_{\tg}^2\\
		={}&1+\langle \sff(\et_A,\et_A), \sff(\et_B,\et_B)\rangle-\langle \sff(\et_A,\et_B), \sff(\et_A,\et_B)\rangle\\
		&+(\tu_{AA}+\tu_{BB})+(\tu_A^2+\tu_B^2)-|\tcm u|_{\tg}^2.
	\end{aligned}
\end{equation*}
In particular,
\begin{align}
	\sum_{i,\alpha}e^{-2u}R^M_{i\alpha i\alpha}
		&=\sum_{i,\alpha}R^{\tilde{M}}_{i\alpha i\alpha}+p\sum_{i}\tu_{ii}+n\sum_{\alpha}\tu_{\alpha\alpha}+p\sum_{i}\tu_i^2+n\sum_{\alpha}\tu_\alpha^2-np|\tcm u|_{\tg}^2,\label{eq:sec-t-n}\\
	\sum_{\alpha\neq \beta}e^{-2u}R^M_{\alpha\beta\alpha\beta}
		&=\sum_{\alpha\neq \beta}R^{\tilde{M}}_{\alpha\beta\alpha\beta}
		+2(p-1)\sum_{\alpha}\tu_{\alpha\alpha}+2(p-1)\sum_{\alpha}\tu_\alpha^2-p(p-1)|\tcm u|_{\tg}^2,\label{eq:sec-n-n}\\
	\sum_{i\neq j}e^{-2u}R^M_{ijij}
		&=\sum_{i\neq j}R^{\tilde{M}}_{ijij}
		+2(n-1)\sum_{i}\tu_{ii}+2(n-1)\sum_{i}\tu_i^2-n(n-1)|\tcm u|_{\tg}^2.\label{eq:sec-t-t}
\end{align}

\textbf{(3) Second fundamental forms}
\begin{equation}\label{eq-208}
	\th_{ij}^{\alpha}=e^{-u}(h_{ij}^{\alpha}-u_{\alpha}\delta_{ij}), \quad\tH^{\alpha}=e^{-u}(H^{\alpha}-u_{\alpha}).
\end{equation}

\textbf{(4) Covariant derivatives of vector fields.}

\begin{lem}
	For $ V=\Gamma(T^{\bot}\Sigma)$, we have
	\begin{align}
		\tV^\alpha_{,j}&=V^\alpha_{,j}+V^\alpha u_j.\label{eq:V-conf-1st}
	\end{align}
\end{lem}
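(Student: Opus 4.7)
The plan is to combine the standard conformal change formula for the Levi-Civita connection on $M$ with the definition of the normal covariant derivative, then re-express the result in the tilde orthonormal frame. From \eqref{eq26} one reads off the familiar identity $\tc_X Y = \cm_X Y + X(u)\,Y + Y(u)\,X - g_M(X,Y)\,\cm u$. For $X$ tangent to $\Sigma$ and $V\in\Gamma(T^\bot\Sigma)$ the inner product $g_M(X,V)$ vanishes, so projecting to the normal bundle gives the clean relation $\tnc_X V = \nc_X V + X(u)\,V$.

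Next I would evaluate at $X = \et_j = e^{-u}e_j$ and expand in the tilde frame $\et_\alpha = e^{-u}e_\alpha$. By definition $\tnc_{\et_j}V = \sum_\alpha \tV^\alpha_{,j}\,\et_\alpha$, while
\[
e^{-u}\bigl[\nc_{e_j}V + u_j V\bigr] = e^{-u}\sum_\alpha\bigl(V^\alpha_{,j}+V^\alpha u_j\bigr)e_\alpha = \sum_\alpha\bigl(V^\alpha_{,j}+V^\alpha u_j\bigr)\et_\alpha.
\]
Comparing coefficients of $\et_\alpha$ immediately yields $\tV^\alpha_{,j} = V^\alpha_{,j} + V^\alpha u_j$.

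As an alternative route, one could work purely with structure equations: pulling \eqref{eq26} back by $\phi$ and using $\theta_\alpha = \theta_\beta = 0$ shows that the normal connection forms are conformally invariant, $\tt_{\beta\alpha} = \theta_{\beta\alpha}$; then the defining relation $\sum_j \tV^\alpha_{,j}\,\tt_j = d\tV^\alpha + \sum_\beta \tV^\beta\,\tt_{\beta\alpha}$, combined with $\tV^\alpha = e^u V^\alpha$, $\tt_j = e^u\theta_j$, and $du|_\Sigma = \sum_i u_i\theta_i$, again produces the identity after matching coefficients of $\theta_j$. The verification is purely algebraic and I expect no substantive obstacle; the only point requiring care is bookkeeping the $e^{\pm u}$ factors when switching between the original and conformal frames.
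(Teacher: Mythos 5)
Your proposal is correct, and both of the routes you describe work. Your primary argument — applying the Koszul-type conformal change formula $\tc_X Y = \cm_X Y + X(u)Y + Y(u)X - g_M(X,Y)\cm u$, observing that the term $Y(u)X$ is tangential while $g_M(X,V)=0$, and then reading off coefficients in the tilde frame — is a genuinely different derivation from the one in the paper. The paper instead pulls back \eqref{eq26} by $\phi$ to conclude $\tt_{\alpha\beta}=\theta_{\alpha\beta}$ and then compares the defining relations $\sum_j V^\alpha_{,j}\theta_j = dV^\alpha + \sum_\beta\theta_{\beta\alpha}V^\beta$ and $\sum_j \tV^\alpha_{,j}\tt_j = d\tV^\alpha + \sum_\beta\tt_{\beta\alpha}\tV^\beta$ together with $\tV^\alpha=e^uV^\alpha$, $\tt_j=e^u\theta_j$; this is exactly the ``alternative route'' you sketch in your last paragraph. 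The two arguments buy different things: your coordinate-free version makes it structurally transparent why only the $X(u)V$ term survives the normal projection (the other correction terms are annihilated by $X\perp V$ and by tangency), whereas the paper's structure-equation computation stays entirely within the moving-frame bookkeeping already set up in Section~2 and avoids invoking the Levi-Civita conformal change formula as a separate input. One small point worth stating explicitly in your first route: the normal projection is the same map for $g_M$ and $\tgm=e^{2u}g_M$ because conformal metrics induce the same orthogonal splitting $TM|_\Sigma = T\Sigma\oplus T^\bot\Sigma$; this is what licenses writing $\tnc_X V$ as the $g_M$-normal part of $\tc_X V$.
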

\begin{proof}
	Pulling \eqref{eq26} back by $\phi$, we get $\theta_{\alpha\beta}=\tt_{\alpha\beta}$.
	Then \eqref{eq:V-conf-1st} is from
	\begin{gather*}
		\tV^\alpha=e^u V^\alpha,\quad \tt_j=e^u \theta_j,\\
		\sum_{j}V^\alpha_{,j} \theta_j=dV^\alpha+\sum_{\beta}\theta_{\beta\alpha}V^\beta,\quad \sum_{j}\tV^\alpha_{,j} \tt_j=d\tV^\alpha+\sum_{\beta}\tt_{\beta\alpha}\tV^\beta.
	\end{gather*}
\end{proof}

\subsection{Some calculations}\label{Sect-2.3}
It is well known that the Jacobi operator is
\begin{gather}
	\mathfrak{J}=-\big(\Delta^\bot+\mathfrak{B}+\mathfrak{R}\big),\label{jacobi}\\
\Delta^\bot V=\sum_{i=1}^{n}\big(\nabla^\bot_{e_i}\nabla^\bot_{e_i}-\nabla^\bot_{\nabla^{\Sigma}_{e_i}e_i}\big) V ,\quad \mathfrak{B}(V)=\sum_{i=1}^{n}h(e_i, A_{V}e_i),\quad \mathfrak{R}(V)=\sum_{i=1}^{n}(R^M(V,e_i)e_i)^\bot,\nonumber
\end{gather}
By using notations above, we have
\begin{equation*}
	I(V,V)=\int_{\Sigma}\Big(|\nc V|_{g_\Sigma}^2-\sum_{i,\alpha,\beta}R^M_{i\alpha i\beta}V^\alpha V^\beta-\sum_{i,j,\alpha,\beta}h_{ij}^{\alpha}h_{ij}^{\beta}V^\alpha V^\beta\Big)\,dv_g.
\end{equation*}

Next, we replace the integrand by quantities w.r.t. the conformal metric $\tg=e^{2u}g$.

\begin{equation}
	\begin{aligned}
		|\nc V|_{g}^2&=\sum_{j,\alpha}(V^\alpha_{,j})^2=\sum_{j,\alpha}(\tV^\alpha_{,j}-\tV^\alpha \tu_{j})^2\\
		&=|\tnc V|_{\tg}^2-\sum_{j,\alpha}2\tV^\alpha_{,j}\tV^\alpha \tu_{j}+|V|_{\tg}^2|\tcs u|_{\tg}^2.
	\end{aligned}
\end{equation}

\begin{equation}
	\begin{aligned}
		\sum_{i,\alpha,\beta}R^M_{i\alpha i\beta}V^\alpha V^\beta={}&\sum_{i,\alpha,\beta}\Big(R^{\tilde{M}}_{i\alpha i\beta}+(\tu_{ii}\delta_{\alpha\beta}+\tu_{\alpha\beta}\delta_{ii})+(\tu_i\tu_i\delta_{\alpha\beta}+\tu_\alpha\tu_\beta\delta_{ii})\\
		&{}\qquad-(\sum_C\tu_{C}^2)\delta_{\alpha\beta}\delta_{ii}\Big)\tV^\alpha \tV^\beta\\
		={}&\sum_{i,\alpha,\beta}R^{\tilde{M}}_{i\alpha i\beta}\tV^\alpha \tV^\beta
		+\sum_{i} \tu_{ii}|V|_{\tg}^2+|\tcs u|_{\tg}^2|V|_{\tg}^2-n|\tcm u|_{\tg}^2|V|_{\tg}^2\\
		&\qquad+n\sum_{\alpha,\beta}\tu_{\alpha\beta}\tV^\alpha \tV^\beta+n\sum_{\alpha,\beta}\tu_\alpha\tu_\beta\tV^\alpha \tV^\beta.
	\end{aligned}
\end{equation}

If $H^{\alpha}=0$, then $\tilde{H}^\alpha=-e^{-u}u_{\alpha}=-\tu_\alpha$.
Hence,
\begin{equation}
	\begin{aligned}
		\sum_{i,j,\alpha,\beta}h_{ij}^{\alpha}h_{ij}^{\beta}V^\alpha V^\beta={}&\sum_{i,j,\alpha,\beta}(\th_{ij}^{\alpha}+\tu_{\alpha}\delta_{ij})(\th_{ij}^{\beta}+\tu_{\beta}\delta_{ij})\tV^\alpha \tV^\beta\\
		={}&\sum_{i,j,\alpha,\beta}\th_{ij}^{\alpha}\th_{ij}^{\beta}\tV^\alpha \tV^\beta
		+n\sum_{\alpha,\beta}\tu_\alpha\tu_\beta\tV^\alpha \tV^\beta\\
		&+n\sum_{\alpha,\beta}\tilde{H}^\alpha\tu_\beta\tV^\alpha \tV^\beta+n\sum_{\alpha,\beta}\tilde{H}^\beta\tu_\alpha\tV^\alpha \tV^\beta\\
		={}&\sum_{i,j,\alpha,\beta}\th_{ij}^{\alpha}\th_{ij}^{\beta}\tV^\alpha \tV^\beta
		-n\sum_{\alpha,\beta}\tu_\alpha\tu_\beta\tV^\alpha \tV^\beta.
	\end{aligned}
\end{equation}
Hence, we obtain the following
\begin{lem}\label{lem:int-conf}
	\begin{equation}\label{eq:int-conf}
		\begin{aligned}
			\langle \Joc V, V\rangle
			={}&|\tnc V|_{\tg}^2-\sum_{i,\alpha,\beta}R^{\tilde{M}}_{i\alpha i\beta}\tV^\alpha \tV^\beta
			-\sum_{i,j,\alpha,\beta}\th_{ij}^{\alpha}\th_{ij}^{\beta}\tV^\alpha \tV^\beta\\
			&-\sum_{j,\alpha}2\tV^\alpha_{,j}\tV^\alpha \tu_{j}-\sum_{i} \tu_{ii}|V|_{\tg}^2+n|\tcm u|_{\tg}^2|V|_{\tg}^2-n\sum_{\alpha,\beta}\tu_{\alpha\beta}\tV^\alpha \tV^\beta.
		\end{aligned}
	\end{equation}
\end{lem}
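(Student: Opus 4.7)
The plan is to rewrite the three summands in the pointwise Jacobi integrand
\[
\langle \Joc V,V\rangle = |\nc V|_g^2 - \sum_{i,\alpha,\beta}R^M_{i\alpha i\beta}V^\alpha V^\beta - \sum_{i,j,\alpha,\beta}h_{ij}^{\alpha}h_{ij}^{\beta}V^\alpha V^\beta,
\]
one by one, using only the conformal transformation laws collected in Section~\ref{Sect-2}, and then to add the three rewrites and collect like terms to read off \eqref{eq:int-conf}.

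For the first summand I would substitute $V^\alpha_{,j}=\tV^\alpha_{,j}-\tV^\alpha \tu_{j}$ (which rearranges \eqref{eq:V-conf-1st} after using $V^\alpha=e^{-u}\tV^\alpha$ and $u_j=e^u\tu_j$), square, and sum; the three resulting blocks are $|\tnc V|_{\tg}^2$, a cross-term $-2\sum_{j,\alpha}\tV^\alpha_{,j}\tV^\alpha\tu_j$, and $|V|_{\tg}^2|\tcs u|_{\tg}^2$. For the second summand I would apply \eqref{eq209} at $(A,B,C,D)=(i,\alpha,i,\beta)$: the four correction terms carrying $\delta_{i\alpha}$ or $\delta_{i\beta}$ vanish, and after using $V^\alpha V^\beta=e^{-2u}\tV^\alpha\tV^\beta$ to absorb the overall $e^{2u}$, the surviving corrections, summed over $i$ via $\sum_i 1=n$, $\sum_i\tu_{ii}$ and $\sum_i\tu_i^2=|\tcs u|_{\tg}^2$, yield $\sum_i\tu_{ii}|V|_{\tg}^2$, $n\sum_{\alpha,\beta}\tu_{\alpha\beta}\tV^\alpha\tV^\beta$, $|\tcs u|_{\tg}^2|V|_{\tg}^2$, $n\sum_{\alpha,\beta}\tu_\alpha\tu_\beta\tV^\alpha\tV^\beta$, and $-n|\tcm u|_{\tg}^2|V|_{\tg}^2$, alongside the main $R^{\tilde M}$-term.

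For the third summand I would invert \eqref{eq-208} as $h^\alpha_{ij}=e^u(\th^\alpha_{ij}+\tu_\alpha\delta_{ij})$, expand $h^\alpha_{ij}h^\beta_{ij}V^\alpha V^\beta$, and contract: the two mixed terms each produce a trace $\sum_i\th^\alpha_{ii}=n\tH^\alpha$. Here the minimality hypothesis becomes essential: $H^\alpha=0$ together with \eqref{eq-208} forces $\tH^\alpha=-\tu_\alpha$, so the two mixed terms combine to $-2n\sum_{\alpha,\beta}\tu_\alpha\tu_\beta\tV^\alpha\tV^\beta$, which together with the pure-square piece $n\sum_{\alpha,\beta}\tu_\alpha\tu_\beta\tV^\alpha\tV^\beta$ nets $-n\sum_{\alpha,\beta}\tu_\alpha\tu_\beta\tV^\alpha\tV^\beta$ beyond the leading $\sum \th^\alpha_{ij}\th^\beta_{ij}\tV^\alpha\tV^\beta$ term.

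Assembling everything with the correct signs, the identity \eqref{eq:int-conf} emerges once two cancellations are checked: the $+|V|_{\tg}^2|\tcs u|_{\tg}^2$ from the gradient piece exactly absorbs the $-|V|_{\tg}^2|\tcs u|_{\tg}^2$ coming out of the curvature piece, and the $-n\sum_{\alpha,\beta}\tu_\alpha\tu_\beta\tV^\alpha\tV^\beta$ from the curvature piece cancels the $+n\sum_{\alpha,\beta}\tu_\alpha\tu_\beta\tV^\alpha\tV^\beta$ from the second-fundamental-form piece. This sign-and-index bookkeeping, in particular the clean separation of $|\tcm u|_{\tg}^2$ from $|\tcs u|_{\tg}^2$ contributions, is the only real obstacle; as a consistency check, setting $u\equiv 0$ collapses every correction term and recovers the standard stability integrand for $\Sigma\subset M$.
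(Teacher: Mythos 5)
Your proposal is correct and follows essentially the same route as the paper: decompose $\langle \Joc V,V\rangle$ into the gradient, ambient-curvature, and second-fundamental-form pieces, rewrite each via the conformal relations \eqref{eq:V-conf-1st}, \eqref{eq209}, \eqref{eq-208} together with the minimality identity $\tH^\alpha=-\tu_\alpha$, and then observe precisely the two cancellations you name (the $|V|_{\tg}^2|\tcs u|_{\tg}^2$ pair and the $n\sum\tu_\alpha\tu_\beta\tV^\alpha\tV^\beta$ pair). The one compressed step is the passage from the $u_{AB},u_A$ corrections appearing literally in \eqref{eq209} to the $\tu_{AB},\tu_A$ corrections in the final display, which needs the conformal second-derivative law $e^{2u}\tu_{AB}=u_{AB}+\sum_C u_C^2\delta_{AB}-2u_Au_B$ (or equivalently the $g\leftrightarrow\tg$-inverted version of \eqref{eq209}); the paper elides this too, so the level of detail is the same.
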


\section{Proof of main theorems}\label{Sect-3}
In this section, we prove Theorems \ref{thm:main-1}.
The chain of maps
\begin{equation*}
	(\Sigma,g_\Sigma) \xrightarrow{\phi} (M,g_M) \xrightarrow{\psi} (S^d,h_1)\subset \mathbb{R}^{d+1}
\end{equation*}
gives conformal metrics $\tgm=\psi^\ast g_M=e^{2u}g_M$ and $\tgs=\phi^\ast \tgm$ on $M$ and $\Sigma$, respectively.
Here $\phi$ is a minimal isometric immersion, and $\psi$ is a conformal map.

For any fixed vector $E\in \mathbb{R}^{d+1}$,
we construct a test normal section $V_E=\sum_\alpha \tV^\alpha \et_\alpha$ such that $\tV^\alpha=e^{au}\langle E, \et_\alpha\rangle$.
Intuitively,  $V_E$ is the projection of $E$ to $\Gamma(T^{\bot}\tilde{\Sigma})$ in $\Gamma(T^{\bot}\tilde{M})$ multiplying a factor $e^{au}$ w.r.t. $\tg$.
\begin{lem}
	For $V_E$ defined above, we have
	\begin{equation*}
		\tV^{\alpha}_{,j}=e^{au}\Big(-\sum_i \th_{ij}^\alpha\langle E, \et_i\rangle+\langle E, \sff(\et_j,\et_\alpha)\rangle +a\tu_j\langle E, \et_\alpha\rangle\Big)
	\end{equation*}
\end{lem}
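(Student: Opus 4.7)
The plan is to compute $\tV^\alpha_{,j}$ directly from the defining relation
\begin{equation*}
\sum_j \tV^\alpha_{,j}\,\tt_j \;=\; d\tV^\alpha + \sum_\beta \tt_{\beta\alpha}\,\tV^\beta
\end{equation*}
established at the end of the preceding lemma. Evaluating on $\et_j$, this reduces to computing $\et_j(\tV^\alpha) + \sum_\beta \tt_{\beta\alpha}(\et_j)\,\tV^\beta$. Since $\tV^\alpha = e^{au}\langle E,\et_\alpha\rangle$, the product rule yields
\begin{equation*}
\et_j(\tV^\alpha) \;=\; a\,\tu_j\,\tV^\alpha + e^{au}\,\langle E,\,\bn_{\et_j}\et_\alpha\rangle,
\end{equation*}
with $\bn$ the flat connection on $\mathbb{R}^{d+1}$ (relative to which the constant vector $E$ is parallel). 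Thus everything hinges on expressing $\bn_{\et_j}\et_\alpha$ in terms of intrinsic objects of $\tilde{\Sigma}$, $\tilde{M}$, and $S^d$.

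The key step is a threefold Gauss--Weingarten decomposition along the chain $\tilde{\Sigma}\subset\tilde{M}\subset S^d\subset\mathbb{R}^{d+1}$. Passing from $\bn$ to $\nabla^{S^d}$ contributes a term proportional to $-\langle\et_j,\et_\alpha\rangle_{\tg}\,\psi$, which vanishes because $\et_j$ is tangent to $\tilde{\Sigma}$ while $\et_\alpha$ is normal to it. Passing from $\nabla^{S^d}$ to $\tcm$ contributes exactly $\sff(\et_j,\et_\alpha)$, the second fundamental form of $(M,\tgm)$ in $S^d$, since both $\et_j$ and $\et_\alpha$ are tangent to $\tilde{M}$. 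Finally, splitting $\tcm_{\et_j}\et_\alpha$ according to $T\tilde{\Sigma}\oplus T^\bot\tilde{\Sigma}$ produces a tangential piece $-\sum_i \th_{ij}^\alpha\,\et_i$ (from the paper's convention $\tt_{i\alpha}=\sum_j \th_{ij}^\alpha\,\tt_j$ combined with antisymmetry of the connection $1$-forms) and a normal piece $\sum_\beta \tt_{\alpha\beta}(\et_j)\,\et_\beta$.

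Taking $\langle E,\cdot\rangle$ of the resulting identity and multiplying by $e^{au}$, the three summands in the target formula appear directly, accompanied by an extra normal-connection contribution $e^{au}\sum_\beta \tt_{\alpha\beta}(\et_j)\,\langle E,\et_\beta\rangle = \sum_\beta \tt_{\alpha\beta}(\et_j)\,\tV^\beta$. By antisymmetry $\tt_{\alpha\beta} = -\tt_{\beta\alpha}$, this equals $-\sum_\beta \tt_{\beta\alpha}(\et_j)\,\tV^\beta$, which cancels exactly against the $\sum_\beta \tt_{\beta\alpha}(\et_j)\,\tV^\beta$ coming from the defining relation, leaving precisely the claimed formula. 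There is no genuine analytic obstacle here; the argument is essentially bookkeeping, and the only delicate point is keeping the sign conventions consistent so that the tangential piece picks up the correct minus sign and the normal-connection terms cancel cleanly.
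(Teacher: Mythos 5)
Your proposal is correct and takes essentially the same approach as the paper: both compute the flat derivative $\nabla^{\mathbb{R}^{d+1}}_{\et_j}\et_\alpha$ and decompose it through the chain $\tilde{\Sigma}\subset\tilde{M}\subset S^d\subset\mathbb{R}^{d+1}$, and both rely on the same antisymmetry of $\tt_{\alpha\beta}$ to cancel the normal-connection terms. The only cosmetic difference is that the paper expands $\tnc_{\et_j}V_E$ by Leibniz on the vector field, while you start from the component structure equation $\sum_j \tV^\alpha_{,j}\tt_j = d\tV^\alpha + \sum_\beta\tt_{\beta\alpha}\tV^\beta$; these are equivalent bookkeeping conventions.
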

\begin{proof}
	A direct computation shows that
	\begin{equation*}
		\begin{aligned}
			\sum_{\alpha}\tV^{\alpha}_{,j}\et_\alpha={}&\tnc_{\et_j}V_E=\Big(\sum_{\alpha}e^{au}\langle E, \et_\alpha\rangle \et_\alpha\Big)\\
			={}&\sum_{\alpha}e^{au}\Big(\langle E, \nabla^{\mathbb{R}^{d+1}}_{\et_j}\et_\alpha\rangle \et_\alpha+\langle E, \et_\alpha\rangle \tnc_{\et_j}\et_\alpha+a\tu_j\langle E, \et_\alpha\rangle\Big)\\
			={}&e^{au}\sum_{\alpha}\big\langle E_t, -A_{\et_\alpha}(\et_j)+\sum_\beta\tt_{\alpha\beta}(\et_j)\et_\beta+\sff(\et_j,\et_\alpha)\big\rangle \et_\alpha\\
			&+\sum_{\alpha}e^{au}\langle E, \et_\alpha\rangle \sum_\beta\tt_{\alpha\beta}(\et_j)\et_\beta+\sum_{\alpha}e^{au}a\tu_j\langle E, \et_\alpha\rangle\\
			={}&\sum_{\alpha}e^{au}\Big(-\sum_i\th_{ij}^\alpha\langle E, \et_i\rangle+\langle E, \sff(\et_j,\et_\alpha)+a\tu_j\langle E, \et_\alpha\rangle\rangle\Big)\et_\alpha.
		\end{aligned}
	\end{equation*}
	So the lemma has been proved.
\end{proof}

Let $\{E_t\}$ be an orthonormal basis of $\mathbb{R}^{d+1}$, and define $V_t=V_{E_t}$.
We have
\begin{prop}\label{prop3.3}
	\begin{equation}\label{eq:prop3.3}
		e^{-2au}\sum_{t}\langle \Joc V_t, V_t\rangle=\sum_{i,\alpha}|\sff(\et_i,\et_\alpha)|^2+p(a-1)^2\sum_{i}\tu_i^2+n\sum_{\alpha}\tu_\alpha^2-\sum_{i,\alpha}e^{-2u}R^M_{i\alpha i\alpha}.
	\end{equation}
\end{prop}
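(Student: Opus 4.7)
The strategy is to substitute $V = V_t$ into the conformal Jacobi formula \eqref{eq:int-conf}, sum over the orthonormal basis $\{E_t\}$ of $\mathbb{R}^{d+1}$, and reduce everything by means of the elementary identity
\[
\sum_{t=1}^{d+1}\langle E_t,X\rangle\langle E_t,Y\rangle=\langle X,Y\rangle_{\mathbb{R}^{d+1}}
\qquad (X,Y\in \mathbb{R}^{d+1}).
\]
Applied to $\tV^{\alpha}_t=e^{au}\langle E_t,\et_\alpha\rangle$ this immediately gives $\sum_t\tV^{\alpha}_t\tV^{\beta}_t=e^{2au}\langle\et_\alpha,\et_\beta\rangle=e^{2au}\delta_{\alpha\beta}$, using that $\psi$ is an isometry from $(M,\tgm)$ into $S^d\subset \mathbb{R}^{d+1}$. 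Hence each ``quadratic in $V$'' term of \eqref{eq:int-conf} collapses cleanly: $\sum_t R^{\tilde M}_{i\alpha i\beta}\tV^\alpha_t\tV^\beta_t = e^{2au}\sum_{i,\alpha}R^{\tilde M}_{i\alpha i\alpha}$, $\sum_t \th^\alpha_{ij}\th^\beta_{ij}\tV^\alpha_t\tV^\beta_t = e^{2au}\sum_{i,j,\alpha}(\th^\alpha_{ij})^2$, $\sum_t \tu_{\alpha\beta}\tV^\alpha_t\tV^\beta_t = e^{2au}\sum_\alpha \tu_{\alpha\alpha}$, and $\sum_t|V_t|_{\tg}^2=pe^{2au}$.

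The two nontrivial terms to compute are $\sum_t|\tnc V_t|_{\tg}^2$ and $\sum_t\sum_{j,\alpha} 2\tV^\alpha_{t,j}\tV^\alpha_t\tu_j$. For these I set
\[
X_{j\alpha}:=-\sum_i\th^\alpha_{ij}\,\et_i+\sff(\et_j,\et_\alpha)+a\tu_j\et_\alpha\;\in\;\mathbb{R}^{d+1},
\]
so that the preceding lemma reads $\tV^\alpha_{t,j}=e^{au}\langle E_t,X_{j\alpha}\rangle$. The crucial observation is that $\sff(\et_j,\et_\alpha)$ is normal to $\tilde M$ in $S^d$, and $\{\et_A\}$ is orthonormal in $\mathbb{R}^{d+1}$, so $\sff(\et_j,\et_\alpha)\perp \et_i,\et_\alpha$ and $\et_i\perp\et_\alpha$ in $\mathbb{R}^{d+1}$. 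Therefore all cross terms in $|X_{j\alpha}|^2$ vanish, giving
\[
\sum_t(\tV^\alpha_{t,j})^2 = e^{2au}\Bigl(\sum_i(\th^\alpha_{ij})^2+|\sff(\et_j,\et_\alpha)|^2+a^2\tu_j^2\Bigr),
\]
and similarly $\langle X_{j\alpha},\et_\alpha\rangle=a\tu_j$, whence $\sum_t 2\tV^\alpha_{t,j}\tV^\alpha_t\tu_j=2ae^{2au}\tu_j^2$. Summing on $j,\alpha$ contributes $e^{2au}\!\sum_{i,j,\alpha}(\th^\alpha_{ij})^2+e^{2au}\!\sum_{j,\alpha}|\sff(\et_j,\et_\alpha)|^2+a^2 p e^{2au}\sum_j\tu_j^2$ to $|\tnc V_t|^2$, and $2ape^{2au}\sum_j\tu_j^2$ to the cross term.

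Plugging these into \eqref{eq:int-conf}, dividing through by $e^{2au}$, and noting the cancellation $\sum_{i,j,\alpha}(\th^\alpha_{ij})^2-\sum_{i,j,\alpha}(\th^\alpha_{ij})^2=0$, I arrive at
\[
e^{-2au}\sum_t\langle \Joc V_t,V_t\rangle =\sum_{j,\alpha}|\sff(\et_j,\et_\alpha)|^2+p(a^2-2a)\sum_j\tu_j^2-\sum_{i,\alpha}R^{\tilde M}_{i\alpha i\alpha}-p\sum_i\tu_{ii}-n\sum_\alpha\tu_{\alpha\alpha}+pn|\tcm u|_{\tg}^2.
\]
The final step is to absorb all the derivative-of-$u$ terms involving $\tilde M$-curvature by invoking the already-established conformal identity \eqref{eq:sec-t-n}, which says precisely
\[
\sum_{i,\alpha}e^{-2u}R^M_{i\alpha i\alpha}-\sum_{i,\alpha}R^{\tilde M}_{i\alpha i\alpha}=p\sum_i\tu_{ii}+n\sum_\alpha\tu_{\alpha\alpha}+p\sum_i\tu_i^2+n\sum_\alpha\tu_\alpha^2-pn|\tcm u|_{\tg}^2.
\]
Substituting this in and collecting the coefficient of $\sum_i\tu_i^2$ as $p(a^2-2a)+p(-1)\cdot(-1)=p(a-1)^2-p+p = p(a-1)^2$ (more directly $p(a^2-2a)-p(-1)=p(a-1)^2$ after rearrangement) yields exactly \eqref{eq:prop3.3}. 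The only delicate step is the orthogonality bookkeeping in $|X_{j\alpha}|^2$; the rest is a mechanical but careful reassembly.
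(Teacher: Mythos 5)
Your proposal is correct and follows essentially the same approach as the paper's proof: apply $\sum_t\langle E_t,X\rangle\langle E_t,Y\rangle=\langle X,Y\rangle$ termwise in \eqref{eq:int-conf}, exploit the mutual orthogonality of $\et_i$, $\et_\alpha$, and the $M$-normal vector $\sff(\et_j,\et_\alpha)$ to compute $\sum_t|\tnc V_t|^2$ and the cross term, cancel the $\sum(\th_{ij}^\alpha)^2$ contributions, and then substitute \eqref{eq:sec-t-n} to eliminate the $\tilde M$-curvature and the second derivatives of $u$, leaving $p(a-1)^2\sum_i\tu_i^2 + n\sum_\alpha\tu_\alpha^2$. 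The bookkeeping at the final step is phrased a bit awkwardly but lands on the correct coefficient $p(a-1)^2$, so there is no gap.
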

\begin{proof}
	By noting the fact $\sum_{t}\langle E_t, \bar U\rangle \langle E_t, \bar V\rangle=\langle \bar U, \bar V\rangle$ for any vector fields $\bar U, \bar V$ in $\mathbb{R}^{d+1}$,
	we have
	\begin{gather}
		\sum_{t}|\tnc V_t|_{\tg}^2=e^{2au}\Big(\sum_{i,j,\alpha}(\th_{ij}^\alpha)^2+\sum_{j,\alpha}|\sff(\et_j,\et_\alpha)|^2+ pa^2\sum_j \tu_j^2\Big),\\
		\sum_{t,i,\alpha,\beta}R^{\tilde{M}}_{i\alpha i\beta}\tV_t^\alpha \tV_t^\beta=e^{2au}\sum_{i,\alpha}
		R^{\tilde{M}}_{i\alpha i\alpha},
		\quad
		\sum_{t, i,j,\alpha,\beta}\th_{ij}^{\alpha}\th_{ij}^{\beta}\tV_t^\alpha \tV_t^\beta=e^{2au}\sum_{i,j,\alpha}(\th_{ij}^\alpha)^2,\\
		\sum_{t,j,\alpha}\tV^\alpha_{t,j}\tV_t^\alpha \tu_{j}=e^{2au}pa\sum_{j}\tu_j^2,
		\quad
		\sum_{t}|V_t|_{\tg}^2=e^{2au}p,
		\quad \sum_{t,\alpha,\beta}\tu_{\alpha\beta}\tV_t^\alpha\tV_t^\beta=e^{2au}\sum_{\alpha}\tu_{\alpha\alpha},
	\end{gather}
	Putting above equalities into \eqref{eq:int-conf},
	we obtain
	\begin{equation}
		\begin{aligned}
			e^{-2au}\sum_{t}\langle \Joc V_t, V_t\rangle={}&\sum_{i,\alpha}|\sff(\et_i,\et_\alpha)|^2+ pa^2\sum_j \tu_j^2-\sum_{i,\alpha}R^{\tilde{M}}_{i\alpha i\alpha}\\
			{}&-2pa\sum_{j}\tu_j^2-p\sum_{i}\tu_{ii}+np|\tcm u|^2_{\tg}-n\sum_{\alpha}\tu_{\alpha\alpha}\\
			={}&\sum_{i,\alpha}|\sff(\et_i,\et_\alpha)|^2-e^{-2u}\sum_{i,\alpha}R^{M}_{i\alpha i\alpha}+p(a-1)^2\sum_{j}\tu_j^2+n\sum_{\alpha}\tu_\alpha^2,
		\end{aligned}
	\end{equation}
	where we used  \eqref{eq:sec-t-n} in the last equality.
\end{proof}

\begin{prop}\label{prop3.4}
	For $p\ge 2$, we have
	\begin{align*}
		\sum_{t}I(V_t,V_t)
		={}&\int_{\Sigma}\big(\mathcal{F}(\sff)-n\big)e^{2au}\,dv_g\\
		&+\int_{\Sigma}\Big(\frac{2-p}{p}\sum_{i,\alpha}R^M_{i\alpha i\alpha}-\frac{n}{p(p-1)}\sum_{\alpha\neq \beta}R^M_{\alpha\beta\alpha\beta}\Big)e^{2(a-1)u}\,dv_g\\
		&+\int_{\Sigma}\big(4a-2-n+p(a-1)^2\big)|\tcs u|_{\tg}^2\,dv_g.
	\end{align*}
	where
	\begin{equation}
		\begin{aligned}
			\mathcal{F}(\sff)=&\Big(1+\frac{2}{p}\Big)\sum_{i,\alpha}|\sff(\et_i,\et_\alpha)|^2-\frac{2}{p}\sum_{i,\alpha}\langle \sff(\et_i, \et_i), \sff(\et_\alpha, \et_\alpha)\rangle
			\\
			&\quad +\frac{n}{p(p-1)}\sum_{\alpha\neq \beta}\big(\langle \sff(\et_\alpha, \et_\alpha), \sff(\et_\beta, \et_\beta)\rangle-|\sff(\et_\alpha, \et_\beta)|^2\big).
		\end{aligned}
	\end{equation}
\end{prop}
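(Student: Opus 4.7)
The plan is to start from Proposition \ref{prop3.3}, integrate the identity multiplied by $e^{2au}$ over $\Sigma$ against $dv_g$, and systematically convert the resulting integrand into the claimed form. The main tools are: (i) the Gauss equation for the conformal immersion $\tilde M\hookrightarrow S^d\subset \R^{d+1}$ to replace squared-norm contributions of $\sff$ with curvature terms $R^{\tilde M}$; (ii) the conformal curvature formulas \eqref{eq:sec-t-n}--\eqref{eq:sec-n-n} to trade $R^{\tilde M}$ for $R^M$; (iii) the minimality of $\Sigma$ in $M$ together with \eqref{eq:2st-d-s} and \eqref{eq-208} to handle the tangential Hessian $\sum_i\tu_{ii}$; and (iv) one integration by parts on $\tilde\Sigma$.

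Concretely, since $S^d$ has constant sectional curvature $1$, the Gauss equation gives
\[
R^{\tilde M}_{ABAB} = 1 + \langle\sff(\et_A,\et_A),\sff(\et_B,\et_B)\rangle - |\sff(\et_A,\et_B)|^2.
\]
Taking the linear combination $\frac{2}{p}\sum_{i,\alpha}(\cdot)_{i\alpha i\alpha} - \frac{n}{p(p-1)}\sum_{\alpha\neq\beta}(\cdot)_{\alpha\beta\alpha\beta}$ of this equation and comparing with the definition of $\mathcal F(\sff)$ yields the algebraic identity
\[
\sum_{i,\alpha}|\sff(\et_i,\et_\alpha)|^2 + n - \mathcal F(\sff) = \frac{2}{p}\sum_{i,\alpha}R^{\tilde M}_{i\alpha i\alpha} - \frac{n}{p(p-1)}\sum_{\alpha\neq\beta}R^{\tilde M}_{\alpha\beta\alpha\beta}.
\]
Substituting \eqref{eq:sec-t-n}--\eqref{eq:sec-n-n} on the right then expresses this in terms of $R^M$ plus derivatives of $u$. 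The crucial observation is that the coefficients of $\sum_\alpha\tu_{\alpha\alpha}$ from the two sums are $-\frac{2n}{p}$ and $+\frac{2n}{p}$ respectively, so this normal-direction Hessian cancels --- the cancellation via Gauss equations highlighted in the introduction, which is what allows replacing the pinching hypothesis of \cite{FZ23} with non-negative sectional curvature.

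The integrand now consists of $\mathcal F(\sff)$, the correct $R^M$-combination (the $-e^{2(a-1)u}\sum_{i,\alpha}R^M_{i\alpha i\alpha}$ term from Proposition \ref{prop3.3} combines with $\frac{2}{p}e^{-2u}\sum_{i,\alpha}R^M_{i\alpha i\alpha}$ from the substitution to produce the coefficient $\frac{2-p}{p}$), a tangential Hessian $\sum_i\tu_{ii}$, a $|\tcs u|^2_{\tg}$ term, and a residual $\sum_\alpha\tu_\alpha^2$ term. Applying \eqref{eq:2st-d-s} to $\tu$ together with the minimality identity $\tilde H^\alpha = -\tu_\alpha$ from \eqref{eq-208} (valid since $H^\alpha=0$) gives $\sum_i\tu_{ii} = \Delta^{\tilde\Sigma}u + n\sum_\alpha\tu_\alpha^2$, whereupon the $\sum_\alpha\tu_\alpha^2$ contributions cancel entirely. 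Integration by parts with respect to $dv_{\tg_\Sigma}=e^{nu}\,dv_g$ yields
\[
\int_\Sigma e^{2au}\,\Delta^{\tilde\Sigma} u \,dv_g = -(2a-n)\int_\Sigma e^{2au}\,|\tcs u|^2_{\tg}\,dv_g,
\]
and combining with the residual $|\tcs u|^2_{\tg}$ terms produces precisely the coefficient $4a-2-n+p(a-1)^2$.

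The hard part is bookkeeping: the identity rests on three separate cancellations --- the normal Hessian $\sum_\alpha\tu_{\alpha\alpha}$ from the Gauss-to-$R^M$ step, the normal gradient $\sum_\alpha\tu_\alpha^2$ from the minimality substitution, and the final assembly of the $|\tcs u|^2_{\tg}$ coefficient after integration by parts --- each requiring the coefficients $\frac{2}{p}$ and $-\frac{n}{p(p-1)}$ in the definition of $\mathcal F(\sff)$ to be exactly those chosen. The hypothesis $p\geq 2$ is needed so that $\frac{n}{p(p-1)}$ is finite, which is why the proposition (and hence the main theorem) is formulated only for codimension at least two.
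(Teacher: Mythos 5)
Your proof is correct and follows essentially the same route as the paper: starting from Proposition \ref{prop3.3}, applying the Gauss equation for $\tilde M \hookrightarrow S^d$ to convert between $\sff$ and $R^{\tilde M}$, using the conformal curvature relations to pass from $R^{\tilde M}$ to $R^M$, invoking minimality to split the trace $\sum_i \tu_{ii}$ into $\Delta^{\tilde\Sigma}u + n\sum_\alpha\tu_\alpha^2$, and integrating by parts with respect to $dv_{\tgs}$. The only difference is cosmetic: the paper first eliminates $\sum_\alpha\tu_{\alpha\alpha}$ between the two primed curvature identities to solve explicitly for $|\tnc u|^2_{\tg}$ and then applies Gauss, whereas you apply Gauss first to identify the linear combination defining $\mathcal F(\sff)$ and then track the $\sum_\alpha\tu_{\alpha\alpha}$ and $\sum_\alpha\tu_\alpha^2$ cancellations directly, which makes the structural role of the coefficients $2/p$ and $n/(p(p-1))$ somewhat more transparent.
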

\begin{proof}
Recall
\begin{equation}\label{eq:2nd-der-restriction}
	\sum_{i}\tu_{ii}=\sum_{i}\tu_{ii}^{\Sigma}-\sum_{\alpha}n\tilde{H}^\alpha\tu_{\alpha}=\sum_{i}\tu_{ii}^{\Sigma}+n\sum_{\alpha}\tu_{\alpha}^2,
\end{equation}
then we can rewrite 
\eqref{eq:sec-t-n}, \eqref{eq:sec-n-n} and \eqref{eq:sec-t-t} as (cf. \eqref{eq:2st-d-s}, \eqref{eq:2.5}, \eqref{eq:2.6} and \eqref{eq:2.7})
\begin{align}
	\sum_{i,\alpha}e^{-2u}R^M_{i\alpha i\alpha}
		&=\sum_{i,\alpha}R^{\tilde{M}}_{i\alpha i\alpha}+p\sum_{i}\tu_{ii}^{\Sigma}+n\sum_{\alpha}\tu_{\alpha\alpha}
		+p(1-n)|\tcs u|_{\tg}^2+n|\tnc u|_{\tg}^2,\label{eq:sec-t-n'}\\
	\sum_{\alpha\neq \beta}e^{-2u}R^M_{\alpha\beta\alpha\beta}
		&=\sum_{\alpha\neq \beta}R^{\tilde{M}}_{\alpha\beta\alpha\beta}
		+2(p-1)\sum_{\alpha}\tu_{\alpha\alpha}-p(p-1)|\tcs u|_{\tg}^2-(p-1)(p-2)|\tnc u|_{\tg}^2,\label{eq:sec-n-n'}\\
	\sum_{i\neq j}e^{-2u}R^M_{ijij}
		&=\sum_{i\neq j}R^{\tilde{M}}_{ijij}
		+2(n-1)\sum_{i}\tu_{ii}^{\Sigma}-(n-1)(n-2)|\tcs u|_{\tg}^2+n(n-1)|\tnc u|_{\tg}^2.\label{eq:sec-t-t'}
\end{align}

When $p\ge 2$, we cancel $\sum_{\alpha}\tu_{\alpha\alpha}$ from \eqref{eq:sec-t-n'} and \eqref{eq:sec-n-n'} and then solve out
\begin{align}
	|\tnc u|_{\tg}^2={}&\Big(\frac{1}{p(p-1)}\sum_{\alpha\neq \beta}R^{\tilde{M}}_{\alpha\beta\alpha\beta}-\frac{2}{np}\sum_{i,\alpha}R^{\tilde{M}}_{i\alpha i\alpha}\Big)\\
	&-\Big(\frac{1}{p(p-1)}\sum_{\alpha\neq \beta}e^{-2u}R^M_{\alpha\beta\alpha\beta}-\frac{2}{np}\sum_{i,\alpha}e^{-2u}R^M_{i\alpha i\alpha}\Big)\\
		&-\frac{2}{n}\sum_{i}\tu_{ii}^{\Sigma}
		+\frac{n-2}{n}|\tcs u|_{\tg}^2.
\end{align}
Putting this into \eqref{eq:prop3.3}, we have
\begin{equation}\label{eq:3.17}
	\begin{aligned}
		e^{-2au}\sum_{t}\langle \Joc V_t, V_t\rangle={}&\sum_{i,\alpha}|\sff(\et_i,\et_\alpha)|^2+\Big(\frac{n}{p(p-1)}\sum_{\alpha\neq \beta}R^{\tilde{M}}_{\alpha\beta\alpha\beta}-\frac{2}{p}\sum_{i,\alpha}R^{\tilde{M}}_{i\alpha i\alpha}\Big)\\
		&+\frac{2-p}{p}\sum_{i,\alpha}e^{-2u}R^M_{i\alpha i\alpha}-\frac{n}{p(p-1)}\sum_{\alpha\neq \beta}e^{-2u}R^M_{\alpha\beta\alpha\beta}\\
			&-2\sum_{i}\tu_{ii}^{\Sigma}
			+\big((n-2)+p(a-1)^2\big)|\tcs u|_{\tg}^2.
	\end{aligned}
\end{equation}

By Gauss equation and Stokes' formula, we have
\begin{align*}
	R^{\tilde{M}}_{\alpha\beta\alpha\beta}={}&1+\langle \sff(\et_\alpha, \et_\alpha), \sff(\et_\beta, \et_\beta)\rangle-|\sff(\et_\alpha, \et_\beta)|^2   \mbox{ for } \alpha\neq \beta,\\
	R^{\tilde{M}}_{i\alpha i\alpha}={}&1+\langle \sff(\et_i, \et_i), \sff(\et_\alpha, \et_\alpha)\rangle-|\sff(\et_i, \et_\alpha)|^2,\\
	\int_\Sigma \sum_{i}\tu_{ii}^{\Sigma}e^{2au}\,dv_g&=\int_\Sigma \sum_{i}\tu_{ii}^{\Sigma}e^{(2a-n)u}\,dv_{\tg}\\
	&=(n-2a)\int_\Sigma |\tcs u|^2 e^{(2a-n)u}\,dv_{\tg}\\
	&=(n-2a)\int_\Sigma |\tcs u|^2e^{2au} \,dv_{g},
\end{align*}
Therefore, we complete the proof by integrating \eqref{eq:3.17} over $(\Sigma, g)$.
\end{proof}

\begin{proof}[Proof of Theorem \ref{thm:main-1}]
	Since $C(M,[g])<c(m,n)$, by the definition we can find a conformal map $\psi: M\to S^d$ such that
	$\max |\sff_\psi|^2<c(m,n)$. We will omit the subscript $\psi$.

	Note Proposition \ref{prop3.4},
	it is sufficient to show that
	\begin{equation*}
		\sum_{t}I(V_t,V_t)<0.
	\end{equation*}
	Firstly, since $p\ge 2$, the sectional curvatures are nonnegative implies
	\begin{equation*}
		\int_{\Sigma}\Big(\frac{2-p}{p}\sum_{i,\alpha}R^M_{i\alpha i\alpha}-\frac{n}{p(p-1)}\sum_{\alpha\neq \beta}R^M_{\alpha\beta\alpha\beta}\Big)e^{2(a-1)u}\,dv_g\le 0
	\end{equation*}
	for all $n\ge 1$ and arbitrary parameter $a$.
	
	Secondly, when $n\ge 2$, we take $a=1$, then
	\begin{equation*}
		4a-2-n+p(a-1)^2=-n+2\le 0;
	\end{equation*}
	When $n=1$, we take $a=1/2$, then
	\begin{equation*}
		4a-2-n+p(a-1)^2=p/4-1\le 0
	\end{equation*}
	provided $p\le 4$ (i.e. $m=n+p\le 5$).
	Hence,
	\begin{equation*}
		\int_{\Sigma}\big(4a-2-n+p(a-1)^2\big)|\tcs u|_{\tg}^2\,dv_g\le 0
	\end{equation*}
	under the assumptions.

	At last, by using the following Lemma \ref{lem-II} and by setting $c(m,n)=\frac{n}{c_1(m,n)}$, we have
	\begin{equation*}
		\int_{\Sigma}\big(\mathcal{F}(\sff)-n\big)e^{2au}\,dv_g<0.
	\end{equation*}
	Hence, we complete the whole proof.
\end{proof}

\begin{lem}\label{lem-II}
	For each $1\le n\le m-2$, there exists a constant $c_1=c_1(m,n)$ depending only on $m,n$ such that
	\begin{equation}
		\mathcal{F}(\sff)\le c_1(m,n)|\sff|^2.
	\end{equation}
\end{lem}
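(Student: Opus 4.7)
The plan is to expand $\mathcal{F}(\sff)$ in components relative to orthonormal frames and bound each resulting piece by $|\sff|^2$ via elementary Cauchy–Schwarz arguments; the lemma is ultimately a purely algebraic statement about the quadratic form $\mathcal{F}$ acting on symmetric bilinear forms.

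Concretely, I pick a local orthonormal frame $\{E'_s\}$ of the normal bundle of $(M,\psi^\ast h_1)$ in $S^d$, set $h^s_{AB}:=\langle \sff(\et_A,\et_B),E'_s\rangle$, and for each $s$ introduce the traces $x_s:=\sum_i h^s_{ii}$, $y_s:=\sum_\alpha h^s_{\alpha\alpha}$ together with the partial squared norms $A_s:=\sum_i(h^s_{ii})^2$, $D_s:=\sum_\alpha(h^s_{\alpha\alpha})^2$, $C_s:=\sum_{i,\alpha}(h^s_{i\alpha})^2$. A direct expansion translates the four blocks in the definition of $\mathcal{F}(\sff)$ into $\sum_s C_s$, $\sum_s x_sy_s$, $\sum_s(y_s^2-D_s)$, and a non-negative piece $\sum_s\sum_{\alpha\neq\beta}(h^s_{\alpha\beta})^2$; the last one enters with a minus sign and is simply dropped.

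The key inequalities are (i) $\sum_s C_s\le|\sff|^2$; (ii) the Cauchy–Schwarz bounds $x_s^2\le nA_s$, $y_s^2\le pD_s$, combined with the weighted AM–GM estimate $|x_sy_s|\le\tfrac{\sqrt{np}}{2}(\lambda A_s+\lambda^{-1}D_s)$ for a parameter $\lambda>0$ at our disposal; and (iii) $y_s^2-D_s\le(p-1)D_s$. Putting these together yields $\mathcal{F}(\sff)\le\sum_s(\kappa_A A_s+\kappa_C C_s+\kappa_D D_s)$ for explicit constants depending only on $m,n,\lambda$, and since $|\sff|^2\ge\sum_s(A_s+2C_s+D_s)$ one arrives at $\mathcal{F}(\sff)\le c_1(m,n)|\sff|^2$ with $c_1(m,n)=\max\{\kappa_A,\kappa_C/2,\kappa_D\}$. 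The choice $\lambda=\sqrt{n/p}$ already balances the $A_s$- and $D_s$-coefficients and recovers the rough constants claimed after the statement of Theorem \ref{thm:main-1}: $c_1=m/p$ for $n\ge 2$ (the maximum being attained by $\kappa_D=1+n/p$) and $c_1=(m+1)/(2(m-1))$ for $n=1$ (the maximum being attained by $\kappa_C/2=1/2+1/p$). There is no real conceptual obstacle; the only care required is the bookkeeping to arrive at a clean, $n$-uniform constant.
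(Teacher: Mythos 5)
Your argument is essentially the paper's: you decompose $\sff$ into components $h^s_{AB}$ along an orthonormal normal frame of $M$ in $S^d$, bound the two trace--trace terms using Cauchy--Schwarz on the diagonal vectors followed by a weighted AM--GM with a free parameter, drop the favourable off-diagonal piece of the $\alpha\beta$-block, and then compare term-by-term against $|\sff|^2$. Up to the change of variable $\epsilon=\lambda\sqrt{p/n}$ this is exactly the chain of inequalities in the paper's proof (the paper applies AM--GM first and then the matrix inequality $(\tr\Omega)^2\le k\tr(\Omega^T\Omega)$, you apply Cauchy--Schwarz first and then AM--GM, but the resulting constants coincide), so the core claim --- existence of $c_1(m,n)$ --- is proved correctly.

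One small numerical slip in your closing remark, however. With $\lambda=\sqrt{n/p}$ (equivalently $\epsilon=1$) and $n=1$ one finds $\kappa_A=1/p$, $\kappa_C/2=1/2+1/p$, $\kappa_D=1+1/p$, so the maximum is $\kappa_D=1+1/p=m/(m-1)$, \emph{not} $\kappa_C/2=(m+1)/(2(m-1))$; the $A_s$- and $D_s$-coefficients are also not balanced by this $\lambda$. To reproduce the paper's sharper bound for $n=1$ you need a different choice, e.g.\ $\lambda=2\sqrt{n/p}$ (equivalently $\epsilon=2$, which is what the paper uses in Remark~\ref{rem:rough-bounds}); then $\kappa_D=1/2+1/p=\kappa_C/2$ dominates $\kappa_A=2/p$ once $p\ge2$, giving $c_1=(m+1)/(2(m-1))$. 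This does not affect the validity of the lemma itself, since any admissible $\lambda$ produces some $c_1(m,n)$.
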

\begin{proof}
	Let $\{\et_i, \et_\alpha, \et_{\mu}\}$ be a local orthonormal frame on $(S^d,h_1)$ such that $\{\et_{\mu}\}$ normal to $M^m$.
	Denote $\tb_{AB}^\mu=\langle \sff(\et_A,\et_B), \et_{\mu}\rangle$.
	By Cauchy-Schwarz inequality, it is not hard to prove that for any $k\times k$ symmetric real matrix $\Omega$, we have
	\begin{equation*}
		(\tr \Omega)^2\le k \tr (\Omega^T\Omega).
	\end{equation*}
	Hence,
	\begin{equation}
		\begin{aligned}
			&\sum_{\alpha\neq \beta}\big(\langle \sff(\et_\alpha, \et_\alpha), \sff(\et_\beta, \et_\beta)\rangle-|\sff(\et_\alpha, \et_\beta)|^2\big)\\
			={}&\sum_{\alpha,\beta}\big(\langle \sff(\et_\alpha, \et_\alpha), \sff(\et_\beta, \et_\beta)\rangle-|\sff(\et_\alpha, \et_\beta)|^2\big)\\
			={}&\sum_{\alpha,\beta,\mu}\big( b_{\alpha\alpha}^\mu b_{\beta\beta}^\mu-(b_{\alpha\beta}^\mu)^2\big)\le (p-1)\sum_{\alpha,\beta,\mu}(b_{\alpha\beta}^\mu)^2,\\
			&\Big|\sum_{i,\alpha}\langle \sff(\et_i, \et_i), \sff(\et_\alpha, \et_\alpha)\rangle\Big|=\Big|\sum_{\mu}\big(\sum_i b_{ii}^\mu \big)\big(\sum_{\alpha}b_{\alpha\alpha}^\mu\big)\Big|\\
			\le {}&\sum_{\mu}\frac{1}{2}\Big(\epsilon\big(\sum_i b_{ii}^\mu \big)^2+\frac{1}{\epsilon}\big(\sum_{\alpha}b_{\alpha\alpha}^\mu\big)^2\Big)\\
			\le{}&\sum_{\mu}\frac{1}{2}\Big(n\epsilon \sum_{i,j} (b_{ij}^\mu)^2+\frac{p}{\epsilon}\sum_{\alpha,\beta}(b_{\alpha\beta}^\mu)^2\Big).
		\end{aligned}
	\end{equation}
	We derive that
	\begin{equation}\label{eq:3.20}
		\begin{aligned}
			\mathcal{F}(\sff)\le {} &\Big(1+\frac{2}{p}\Big)\sum_{i,\alpha,\mu}(b_{i\alpha}^\mu)^2+\frac{n\epsilon}{p}\sum_{i,j,\mu} (b_{ij}^\mu)^2+\frac{1}{\epsilon}\sum_{\alpha,\beta,\mu}(b_{\alpha\beta}^\mu)^2+\frac{n}{p}\sum_{\alpha,\beta,\mu}(b_{\alpha\beta}^\mu)^2\\
			= {} &\frac{1}{p}\Big((\frac{p}{2}+1)2\sum_{i,\alpha,\mu}(b_{i\alpha}^\mu)^2+n\epsilon \sum_{i,j,\mu} (b_{ij}^\mu)^2+(n+\frac{p}{\epsilon})\sum_{\alpha,\beta,\mu}(b_{\alpha\beta}^\mu)^2\Big).
		\end{aligned}
	\end{equation}
	
	For any fixed $n$ (so $p=m-n$ is also fixed), 
	let $\epsilon_0$ be the positive root of $n\epsilon=n+p/\epsilon$, and set $c_2(m,n)=\max\{1+p/2,n\epsilon_0\}$.
	Then from \eqref{eq:3.20} we have
	\begin{equation*}
			\mathcal{F}(\sff)\le \frac{c_2(m,n)}{p}\Big(2\sum_{i,\alpha,\mu}(b_{i\alpha}^\mu)^2+\sum_{i,j,\mu} (b_{ij}^\mu)^2+\sum_{\alpha,\beta,\mu}(b_{\alpha\beta}^\mu)^2\Big)
			=c_1(m,n)|\sff|^2,
	\end{equation*}
	where $c_1(m,n)=\frac{c_2(m,n)}{m-n}$.
\end{proof}

\begin{rem}\label{rem:rough-bounds}
	We can choose $\epsilon$ in \eqref{eq:3.20} and get some rough bounds.
	For instance, when $n=1$, we take $\epsilon=2$. Then
	\begin{equation*}
		\mathcal{F}(\sff)\le \frac{m+1}{2(m-1)}|\sff|^2.
	\end{equation*}
	When $n\ge 2$, we simply take $\epsilon=1$. Then
	\begin{equation*}
			\mathcal{F}(\sff)\le \frac{m}{m-n}|\sff|^2.
	\end{equation*}
\end{rem}

In the end, we give an example to shows that even if $M^m$ is a hypersurface of $\mathbb{R}^{m+1}$, our theorems are not implied by previous results for that $M^m$ is $\delta$-pinched.
\begin{ex}\label{ex:ellipsoid}
	We consider a $4$-dimensional ellipsoid in $\mathbb{R}^5$ as follows
	\begin{equation}
		M^4=\Big\{(x_1,\dots,x_5)\in \mathbb{R}^5 \mid  x_1^2+\dots+x_4^2+\frac{x_5^5}{a^2}=1, a>0\Big\}.
	\end{equation}

	The sectional curvatures of $M$ satisfies (cf. \cite{SP86})
	\begin{equation}
		a^2\le K_M\le 1/a^4
	\end{equation}
	for $0<a<1$ and
	\begin{equation}
		\frac{1}{a^4}\le K_M\le a^2 
	\end{equation}
	for $a>1$.
	Hence, $M$ is $\delta$-pinched with $\delta=a^6$ for $0<a<1$ and $\delta=1/a^6$ for $a>1$.
	
	By \cite[Theorem 1.3]{CW13}, it is required that $\delta=1/\sqrt{m+1}=1/\sqrt{5}$, which implies $a$ must satisfy $ 0.88\approx(1/5)^{1/12}\le a\le 5^{1/12}\approx 1.14$.

	On the other hand, the standard metric $h_0$ of $\mathbb{R}^5$ is $h_0=\sum_{1\le A\le 5} (dx_A)^2$.
	By stereographic projection $\mathbb{R}^5\to S^5$, the conformal metric on $\mathbb{R}^5$
	is given by $e^{2u}h_0$
	with
	\begin{equation}
		e^u=\frac{2}{1+|x|^2}.
	\end{equation}
	Restricting this conformal metric to $M$, we obtain an isometric immersion $(M^4,\tg_M) \to S^5$.
	By Appendix A, the second fundamental form $\sff$ of $(M^4,\tg_M)$ in $S^5$ satisfies
	\begin{equation}
		\max |\sff|^2=\begin{cases}
			(1/a^2-1)^2, & \mbox{for $0<a\le 1$};\\
			(a^2-1)^2 a^2, & \mbox{for $a>1$}.
		\end{cases}
	\end{equation}
	For $m=4$, we can take
\begin{equation}
	c(4,1)=6/5, \quad c(4,2)=\sqrt{5}-1\approx 1.236;\quad c'(4)=6/5.
\end{equation}

	Hence, considering Corollary \ref{cor:1}, $\max |\sff|^2<6/5$ provided $0.691 \approx a_1<a<a_2 \approx 1.346$.
\end{ex}

\appendix
\section{The conformal second fundamental form of ellipsoids}
In this appendix, we give the details of Example \ref{ex:ellipsoid}.
We consider $n$-ellipsoid
\begin{equation}
	M^n=\Big\{(x^1,\dots,x^{n+1})\in \mathbb{R}^{n+1} \mid  (x^1)^2+\dots+(x^n)^2+\frac{(x^{n+1})^2}{a^2}=1, a>0\Big\}.
\end{equation}
First, we compute the second fundamental form of $M^n$ in $\mathbb{R}^{n+1}$.
By symmetry and continuity, we only need consider $M_{+}=\{x\in M\mid x^{n+1}>0\}$.

For a graph $x^{n+1}=a\sqrt{1-\sum_{i=1}^n (x^i)^2}=:f(x^1,\dots,x^n)$.
We choose the unit normal vector
\begin{equation}
	e_{n+1}=\frac{1}{w}\Big(\sum_{i=1}^n \frac{\partial}{\partial x^i}-\frac{\partial}{\partial x^{n+1}}\Big),
\end{equation}
then it is well known that (cf. \cite{SP86})
the induced metric $g=\sum_{1\le i, j\le n}g_{ij}dx^i dx^j$ and the second fundamental form $h=\sum_{1\le i, j\le n}h_{ij}dx^i dx^j$ are respectively given by
\begin{equation}
	g_{ij}=\delta_{ij}+f_i f_j,\quad
	h_{ij}=-\frac{1}{w}f_{ij},
\end{equation}
where
\begin{gather}
	f_i=\frac{\partial f}{\partial x^i}=-\frac{a^2}{f}x^i,\quad
	f_{ij}=\frac{\partial^2 f}{\partial x^i \partial  x^j}=-\frac{a^2\delta_{ij}+f_i f_j}{f},\\
	w=\sqrt{1+|\nabla f|^2}, \quad
	h_{ij}=\frac{1}{wf}\Big(a^2\delta_{ij}+f_i f_j\Big),\\
	|\nabla f|^2=\sum_{i=1}^n f_i^2=a^2\Big(\frac{a^2}{f^2}-1\Big).
\end{gather}
Then the mean curvature is given by
\begin{equation}
	nH=\sum_{1\le i,j \le n}(g^{ij}h_{ij})=\frac{1}{w f}\Big(n a^2 +\frac{1-a^2}{w^2}|\nabla f|^2\Big),
\end{equation}
and the squared norm of the second fundamental form is given by
\begin{equation}
	|h|^2_g=\sum_{1\le i,j,k,l\le 4}g^{ik}g^{jl}h_{ij}h_{kl}=\frac{1}{w^2 f^2}\Big(na^4 +\big(\frac{1-a^2}{w^2}\big)^2|\nabla f|^4+2a^2 \frac{1-a^2}{w^2}|\nabla f|^2\Big),
\end{equation}
where
\begin{equation}
	g^{ij}=\delta_{ij}-\frac{f_i f_j}{w^2}.
\end{equation}

When consider the conformal metric $\tg=e^{2u}g$ with 
\begin{equation}
	e^u=\frac{2}{1+\sum_{i=1}^n (x^i)^2+(x^{n+1})^2}=\frac{2}{2+(1-1/a^2)f^2},
\end{equation}
we have
\begin{equation}
	u_{n+1}:=e_{n+1} u = e^u\frac{1}{w}\Big(\frac{a^2\sum_{i=1}^n(x^i)^2}{f}+x^{n+1}\Big)=e^u\frac{a^2}{wf}.
\end{equation}

Hence, by \eqref{eq-208} and above equations,
the squared norm of the second fundamental form w.r.t the conformal metric is
\begin{equation}
		|\th|_{\tg}^2= e^{-2u}\big(|h|^2+n (u_{n+1})^2-2 n u_{n+1}\big)=\frac{(a^2-1)^2}{4}G_n((x^{n+1})^2),
\end{equation}
where
\begin{equation}
	\begin{aligned}
		G_n(t)=\frac{ (n-1)(a^2-1)^2 t^4 -2 (n-1) a^4 (a^2-1) t^3 + a^4 ((n-1) a^4 +9) t^2 - 12 a^6 t +4 a^8 }{(a^4+(1-a^2)t)^3}.
	\end{aligned}
\end{equation}
Now let $n=4$.
One can check that
	\begin{equation}
		\max_{0\le t\le a^2}G_4(t)=\begin{cases}
			4/a^4, & \mbox{for $0<a\le 1$};\\
			4a^2, & \mbox{for $a>1$}.
		\end{cases}
	\end{equation}
	Hence,
	\begin{equation}
		\max|\th|^2_{\tg}=\begin{cases}
			(1/a^2-1)^2, & \mbox{for $0<a\le 1$};\\
			(a^2-1)^2 a^2, & \mbox{for $a>1$}.
		\end{cases}
	\end{equation}
\begin{bibdiv}
	\begin{biblist}
	
	\bib{Ami76}{article}{
		  author={Aminov, Ju.~A.},
		   title={The instability of a minimal surface in an {$n$}-dimensional {R}iemannian space of positive curvature},
			date={1976},
		 journal={Mat. Sb. (N.S.)},
		  volume={100(142)},
		  number={3},
		   pages={400\ndash 419, 478},
		  review={\MR{0438257}},
	}
	
	\bib{BS09}{article}{
		  author={Brendle, Simon},
		  author={Schoen, Richard},
		   title={Manifolds with {$1/4$}-pinched curvature are space forms},
			date={2009},
			ISSN={0894-0347},
		 journal={J. Amer. Math. Soc.},
		  volume={22},
		  number={1},
		   pages={287\ndash 307},
			 url={http://dx.doi.org/10.1090/S0894-0347-08-00613-9},
		  review={\MR{2449060}},
	}
	
	\bib{Che73}{book}{
		  author={Chen, Bang-yen},
		   title={Geometry of submanifolds},
	   publisher={Marcel Dekker, Inc., New York},
			date={1973},
			note={Pure and Applied Mathematics, No. 22},
		  review={\MR{0353212}},
	}
	
	\bib{CW13}{article}{
		  author={Chen, Hang},
		  author={Wang, Xianfeng},
		   title={On stable compact minimal submanifolds of {R}iemannian product manifolds},
			date={2013},
			ISSN={0022-247X},
		 journal={J. Math. Anal. Appl.},
		  volume={402},
		  number={2},
		   pages={693\ndash 701},
			 url={http://dx.doi.org/10.1016/j.jmaa.2013.01.068},
		  review={\MR{3029182}},
	}
	
	\bib{CW19}{article}{
		  author={Chen, Hang},
		  author={Wang, Xianfeng},
		   title={Sharp {R}eilly-type inequalities for a class of elliptic operators on submanifolds},
			date={2019},
			ISSN={0926-2245},
		 journal={Differential Geom. Appl.},
		  volume={63},
		   pages={1\ndash 29},
			 url={https://doi.org/10.1016/j.difgeo.2018.12.008},
		  review={\MR{3896192}},
	}
	
	\bib{Che68}{book}{
		  author={Chern, S.~S.},
		   title={Minimal submanifolds in a {R}iemannian manifold},
		  series={University of Kansas, Department of Mathematics Technical Report 19 (New Series)},
	   publisher={Univ. of Kansas, Lawrence, Kan.},
			date={1968},
		  review={\MR{0248648}},
	}
	
	\bib{FZ23}{article}{
		  author={Franz, Giada},
		  author={Trinca, Federico},
		   title={On the stability of minimal submanifolds in conformal spheres},
			date={2023},
			ISSN={1050-6926,1559-002X},
		 journal={J. Geom. Anal.},
		  volume={33},
		  number={10},
		   pages={Paper No. 335, 16},
			 url={https://doi.org/10.1007/s12220-023-01398-4},
		  review={\MR{4624987}},
	}
	
	\bib{HW15}{incollection}{
		  author={Howard, Ralph},
		  author={Wei, Shihshu~Walter},
		   title={On the existence and nonexistence of stable submanifolds and currents in positively curved manifolds and the topology of submanifolds in {E}uclidean spaces},
			date={2015},
	   booktitle={Geometry and topology of submanifolds and currents},
		  series={Contemp. Math.},
		  volume={646},
	   publisher={Amer. Math. Soc., Providence, RI},
		   pages={127\ndash 167},
			 url={https://doi.org/10.1090/conm/646/12978},
		  review={\MR{3384978}},
	}
	
	\bib{HW03}{article}{
		  author={Hu, Ze-Jun},
		  author={Wei, Guo-Xin},
		   title={On the nonexistence of stable minimal submanifolds and the {L}awson-{S}imons conjecture},
			date={2003},
			ISSN={0010-1354},
		 journal={Colloq. Math.},
		  volume={96},
		  number={2},
		   pages={213\ndash 223},
			 url={http://dx.doi.org/10.4064/cm96-2-6},
		  review={\MR{2010356}},
	}
	
	\bib{LS73}{article}{
		  author={Lawson, H.~Blaine, Jr.},
		  author={Simons, James},
		   title={On stable currents and their application to global problems in real and complex geometry},
			date={1973},
			ISSN={0003-486X},
		 journal={Ann. of Math. (2)},
		  volume={98},
		   pages={427\ndash 450},
		  review={\MR{0324529}},
	}
	
	\bib{Li97}{article}{
		  author={Li, Haizhong},
		   title={Some nonexistence theorems on stable minimal submanifolds},
			date={1997},
			ISSN={0010-1354},
		 journal={Colloq. Math.},
		  volume={73},
		  number={1},
		   pages={1\ndash 13},
		  review={\MR{1436947}},
	}
	
	\bib{Ohn86a}{article}{
		  author={Ohnita, Yoshihiro},
		   title={Stable minimal submanifolds in compact rank one symmetric spaces},
			date={1986},
			ISSN={0040-8735},
		 journal={Tohoku Math. J. (2)},
		  volume={38},
		  number={2},
		   pages={199\ndash 217},
			 url={http://dx.doi.org/10.2748/tmj/1178228488},
		  review={\MR{843807}},
	}
	
	\bib{Oka89}{article}{
		  author={Okayasu, Takashi},
		   title={On the instability of minimal submanifolds in {R}iemannian manifolds of positive curvature},
			date={1989},
			ISSN={0025-5874,1432-1823},
		 journal={Math. Z.},
		  volume={201},
		  number={1},
		   pages={33\ndash 44},
			 url={https://doi.org/10.1007/BF01161992},
		  review={\MR{990186}},
	}
	
	\bib{Pet16}{book}{
      author={Petersen, Peter},
       title={Riemannian geometry},
     edition={Third},
      series={Graduate Texts in Mathematics},
   publisher={Springer, Cham},
        date={2016},
      volume={171},
        ISBN={978-3-319-26652-7; 978-3-319-26654-1},
         url={https://doi.org/10.1007/978-3-319-26654-1},
      review={\MR{3469435}},
}

	\bib{SH01}{article}{
		  author={Shen, Yi-Bing},
		  author={He, Qun},
		   title={On stable currents and positively curved hypersurfaces},
			date={2001},
			ISSN={0002-9939},
		 journal={Proc. Amer. Math. Soc.},
		  volume={129},
		  number={1},
		   pages={237\ndash 246 (electronic)},
			 url={http://dx.doi.org/10.1090/S0002-9939-00-05753-1},
		  review={\MR{1784025}},
	}
	
	\bib{SP86}{article}{
		  author={Shen, Yi~Bing},
		  author={Pan, Yang~Lian},
		   title={Harmonic mappings of ellipsoids},
			date={1986},
			ISSN={1003-3998},
		 journal={Acta Math. Sci. Ser. A (Chin. Ed.)},
		  volume={6},
		  number={1},
		   pages={71\ndash 75},
	}
	
	\bib{Sim68}{article}{
		  author={Simons, James},
		   title={Minimal varieties in riemannian manifolds},
			date={1968},
			ISSN={0003-486X},
		 journal={Ann. of Math. (2)},
		  volume={88},
		   pages={62\ndash 105},
		  review={\MR{0233295}},
	}
	
	\bib{TU22}{article}{
		  author={Torralbo, Francisco},
		  author={Urbano, Francisco},
		   title={Index of compact minimal submanifolds of the {B}erger spheres},
			date={2022},
			ISSN={0944-2669,1432-0835},
		 journal={Calc. Var. Partial Differential Equations},
		  volume={61},
		  number={3},
		   pages={Paper No. 104, 35},
			 url={https://doi.org/10.1007/s00526-022-02215-6},
		  review={\MR{4404849}},
	}
	
	\bib{Zil77}{article}{
		  author={Ziller, Wolfgang},
		   title={Closed geodesics on homogeneous spaces},
			date={1977},
			ISSN={0025-5874,1432-1823},
		 journal={Math. Z.},
		  volume={152},
		  number={1},
		   pages={67\ndash 88},
			 url={https://doi.org/10.1007/BF01214223},
		  review={\MR{514744}},
	}
	
	\end{biblist}
	\end{bibdiv}
	
\end{document}